\documentclass[12pt]{amsart}
\oddsidemargin = 1cm \evensidemargin = 1cm
\textwidth = 6.1in
\textheight =8.0in

\usepackage{amssymb}
\usepackage{tikz-cd}
\numberwithin{equation}{section}

\def\O{{\mathcal{O}}}
\def\p{{\mathfrak{p}}}

\def\C{\mathbb{C}}
\def\Z{\mathbb{Z}}
\def\Q{\mathbb{Q}}
\def\R{\mathbb{R}}

\def\Z{{\mathbb Z}}

\def\O{{\mathcal{O}}}

\newcommand\mum{\boldsymbol\mu_{m^\infty}}
\newcommand\mup{\boldsymbol\mu_{p^\infty}}
\newcommand\mud{\boldsymbol\mu_{d^\infty}}
\newcommand\mutwo{\boldsymbol\mu_{2^\infty}}

\newtheorem{theorem}{Theorem}[section]
\newtheorem{lemma}[theorem]{Lemma}

\newtheorem{proposition-definition}[theorem]{Proposition-Definition}
\newtheorem{corollary}[theorem]{Corollary}
\newtheorem{conjecture}[theorem]{Conjecture}

\theoremstyle{definition}
\newtheorem{example}[theorem]{Example}
\newtheorem{question}[theorem]{Question}

\theoremstyle{remark}
\newtheorem*{remark}{Remark}

\makeatletter
\@namedef{subjclassname@2020}{\textup{2020} Mathematics Subject Classification}
\makeatother

\title{Roots of unity and higher ramification in iterated extensions}
\author[S. Hamblen]{Spencer Hamblen}
\address{Department of Mathematics and Computer Science, McDaniel College, 2 College Hill, Westminster, MD, 21157, USA}
\email{shamblen@mcdaniel.edu}
\author[R. Jones]{Rafe Jones}
\address{Department of Mathematics and Statistics, Carleton College, 1 North College St, Northfield, MN, 55057, USA}
\email{rfjones@carleton.edu}
%\date{\today}
\subjclass[2020]{37P20, 11S15, 37P15, 37P05, 11R18}
\thanks{The first author's research was partially supported by the McDaniel College Faculty Development Fund.  The second author's research was partially supported by an AMS-Simons Research Enhancement Grant for PUI faculty.}

\begin{document}

\begin{abstract}
Given a field $K$, a rational function $\phi \in K(x)$, and a point $b \in \mathbb{P}^1(K)$, we study the extension $K(\phi^{-\infty}(b))$ generated by the union over $n$ of all solutions to $\phi^n(x) = b$, where $\phi^n$ is the $n$th iterate of $\phi$. We ask when a finite extension of $K(\phi^{-\infty}(b))$ can contain all $m$-power roots of unity for some $m \geq 2$, and prove that this occurs for several families of rational functions. A motivating application is to understand the higher ramification filtration when $K$ is a finite extension of $\Q_p$ and $p$ divides the degree of $\phi$, especially when $\phi$ is post-critically finite (PCF). We show that all higher ramification groups are infinite for new families of iterated extensions, for example those given by bicritical rational functions with periodic critical points. We also give new examples of iterated extensions with subextensions satisfying an even stronger ramification-theoretic condition called arithmetic profiniteness. We conjecture that every iterated extension arising from a PCF map should have a subextension with this stronger property, which would give a dynamical analogue of Sen's theorem for PCF maps. 
\end{abstract}

\maketitle

\section{Introduction}

The Galois theory of iterates of a rational function, also called the study of arboreal Galois representations, has attracted recent interest for a number of reasons. Among them is the search for large, explicitly given Galois extensions of $\Q$, and several authors have given conditions under which the Galois groups in this setting are as large as possible (see \cite{survey} for a survey). Such results rely on a study of ramification over various primes of the ground field, but this ramification is almost always tame, and any wild ramification plays little role. Thus far the wild case has not been studied in detail, with two notable exceptions: \cite{Poonen2}, which treats unicritical polynomials and focuses on the size of the Galois groups rather than the depth of ramification, and \cite{sing1, sing2}, which give remarkably detailed results on the higher ramification filtration in certain circumstances. 

Here our aim is to study the depth of wild ramification in iterated $p$-adic extensions for new classes of rational functions. To this end, we look for $p$-power roots of unity in iterated extensions of an arbitrary field. More specifically, for a field $K$ and a rational function $\phi \in K(x)$, denote by $\phi^{-n}(b)$ the set of all solutions (in a fixed algebraic closure $\overline{K}$) to $\phi^n(x) = b$, where $b \in \mathbb{P}^1(K)$ and $\phi^n$ is the $n$th iterate of $\phi$. Take $\phi^{-\infty}(b)$ to be the full backward orbit of $b$, that is, $\bigcup_{n \geq 0} \phi^{-n}(b)$, where $\phi^0(x) = x$. 
We consider the following question: 
\begin{question} \label{rootsquest}
Let $K$ be a field. For which $\phi \in K(x)$ and $b \in \mathbb{P}^1(K)$ does there exist $m \geq 2$ such that $K(\mum)$ is contained in a finite extension of $K(\phi^{-\infty}(b))$?
\end{question}

Here $\mum$ is the union of all roots in $\overline{K}$ of $x^{m^k} - 1$, as $k$ varies over the positive integers. Our main result addressing Question \ref{rootsquest} is the following:

\begin{theorem} \label{mainroots}
Let $K$ be a field and let $\phi \in K(x)$ have degree $d \geq 2$, where $d$ is not divisible by the characteristic of $K$. Then $K(\mum)$ is contained in a finite extension of $K(\phi^{-\infty}(b))$ for any non-exceptional $b$ if:
\begin{enumerate}
    \item $\phi$ is M\"obius-conjugate (over $\overline{K}$) to a rational function in $x^m$ for which $0$ and $\infty$ are periodic;
    \item $\phi$ is M\"obius-conjugate (over $\overline{K}$) to $T_d$, the degree-$d$ monic Chebyshev polynomial, where $m \mid d$;
    \item $\phi$ is a flexible Latt\`es map of degree divisible by $m$.
\end{enumerate}
\end{theorem}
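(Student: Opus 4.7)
The plan is to handle the three cases separately, exploiting a common theme: each family admits (at least locally) a semiconjugacy to either a power map or to elliptic-curve multiplication, and the iterated preimages under the latter naturally contain $m$-power roots of unity.

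For case~(2), after M\"obius conjugation assume $\phi = T_d$, and invoke the classical semiconjugacy $T_d \circ \sigma = \sigma \circ P_d$ with $\sigma(z) = (z+z^{-1})/2$ and $P_d(z) = z^d$. Lifting $b = \sigma(z_0)$, the iterated preimages $P_d^{-k}(z_0) = \{\zeta z_0^{1/d^k} : \zeta \in \mu_{d^k}\}$ generate a field containing $\mu_{d^\infty} \supseteq \mum$ (since $m \mid d$); because $\sigma$ is a two-sheeted cover with deck transformation $z \mapsto z^{-1}$, $K(T_d^{-\infty}(b))$ has index at most two in $K(P_d^{-\infty}(z_0))$, so $\mum$ lies in a finite extension of $K(T_d^{-\infty}(b))$. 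For case~(3), I apply the analogous Latt\`es semiconjugacy $\phi \circ \pi = \pi \circ [n]$ where $\pi\colon E \to \mathbb{P}^1$ is the $\pm 1$ quotient on an elliptic curve $E$ and $\deg \phi = n^2$. Lifting $b = \pi(P)$ to $E$ (possibly in a quadratic extension of $K$), the iterated preimages of $P$ under $[n]$ generate a field containing $K(E[n^\infty])$, which contains $\mu_{n^\infty}$ by the Weil pairing. Since $m \mid n^2$ forces every prime of $m$ to divide $n$, we obtain $\mum \subseteq \mu_{n^\infty}$, and $K(\phi^{-\infty}(b))$ differs from the elliptic preimage field by at most the quadratic $\pi$-cover.

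Case~(1) is the most delicate, since $\phi(x) = \psi(x^m)$ does not admit a clean global semiconjugacy of this sort. Each preimage step factors through $x \mapsto x^m$, so for non-exceptional $b$ we immediately have $\mu_m \subseteq K(\phi^{-\infty}(b))$ (any $\mu_m$-fiber containing two distinct preimages contributes a primitive $m$th root of unity). To amplify this to $\mum$, the plan is to exploit the periodicity of $0$: if $n$ is its period, then $\phi^n$ has local form $\lambda x^m + O(x^{m+1})$ at $0$, and inductively $\phi^{kn}$ has local form $\lambda_k x^{m^k} + O(x^{m^k+1})$. Inverse branches of $\phi^{kn}$ at $0$, expressed as Puiseux series in $y^{1/m^k}$, give $m^k$ branches that differ by $\mu_{m^k}$, so the field generated by their values at any suitable test point contains $\mu_{m^k}$; the periodicity of $\infty$ provides a dual argument. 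The main obstacle is to transfer this local root-of-unity information into the preimage field of an arbitrary non-exceptional $b$---either by showing that preimages of $b$ accumulate near $0$ or $\infty$ in a suitable algebraic sense, or by replacing $b$ with a conveniently chosen test point near the critical orbit and arguing that the preimage fields of any two non-exceptional points agree up to finite extension. The assumption that both $0$ and $\infty$ (not just one) are periodic is likely what provides the rigidity needed to carry out this transfer uniformly for every non-exceptional $b$.
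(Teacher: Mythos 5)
Your cases (2) and (3) follow the same route as the paper (lift along the semiconjugacy to $x^d$, resp.\ to $P \mapsto [d]P+Q$ on $E$, harvest roots of unity upstairs, descend through the degree-$2$ quotient), but the finiteness claims are stated too loosely: a compositum of infinitely many quadratic extensions need not be finite, so ``index at most two'' does not follow merely from $\deg v = \deg\pi = 2$. The paper closes this by noting that the lifted preimage fields are nested with uniformly bounded degree over $K(\phi^{-\infty}(b))$ (bound $8$ in the Latt\`es case), and in the Chebyshev case by proving the sharper statement $v(\zeta_{d^n}) \in K(T_d^{-n}(b))$ via the identity $v(x)v(y)=v(xy)+v(x/y)$. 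These are repairs, not obstructions.

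The genuine gap is case (1). Your opening move ($\zeta_m = \zeta_m\alpha/\alpha$ for $\alpha\in\phi^{-1}(b)\setminus\{0,\infty\}$) is exactly the paper's, but the amplification from $\mu_m$ to $\mum$ cannot go through inverse Puiseux branches at $0$ and $\infty$: those branches parametrize preimages of points in a formal neighborhood of $0$, and the backward orbit of a generic non-exceptional $b$ never enters such a neighborhood --- $0$ is a superattracting periodic point, so $\alpha\in\phi^{-n}(b)$ close to $0$ forces $b=\phi^n(\alpha)$ close to $0$; over a general field there is not even a topology in which to phrase accumulation. Your fallback, that preimage fields of any two non-exceptional basepoints agree up to finite extension, is false in general. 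The paper's mechanism is global and algebraic: after conjugating (over $K(\alpha,\beta)$, which you must adjoin) and replacing $\phi$ by $\phi^n$ so that $\phi=f/g$ with $f,g\in K[x^m]$, $f$ monic, $f(0)=0$, $\deg f>\deg g$, Vieta applied to $h=f-\alpha g$ gives $\prod_{h(\gamma)=0}\gamma=(-1)^{d+1}g(0)\,\alpha$; since the roots of $h$ come in full $\mu_m$-orbits, this product is, up to an explicit sign, the $m$-th power of a product of $d/m$ preimages of $\alpha$. Hence in the set $R$ of balanced quotients of products of backward-orbit elements, every element has an $m$-th root in $R$, and starting from $\zeta_m\in R$ one gets $\zeta_{m^k}\in K(\phi^{-\infty}(b))$ for all $k$. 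Note that periodicity of $0$ and $\infty$ is used only to achieve the normalization ($f$ monic with $f(0)=0$) that makes the product of roots a fixed multiple of $\alpha$ itself, which is what allows the construction to iterate; it plays no local-analytic role.
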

Recall that $b$ is exceptional (for $\phi$) if $\phi^{-\infty}(b)$ is a finite set. For more precise versions of the three statements given in Theorem \ref{mainroots}, see Corollary \ref{conjunity}, Theorem \ref{chebmain}, and Theorem \ref{Lattesroots}. Note that if the conjugacy in part (1) is defined over $K$, then in fact $K(\mum) \subseteq K(\phi^{-\infty}(b))$; see Theorem \ref{rootsof1} and Corollary \ref{conjunity}. As an application of part (1) of Theorem \ref{mainroots}, we show that if $\phi$ has precisely two critical points, both of which are periodic, then $K(\mud)$ is contained in a finite extension of $K(\phi^{-\infty}(b))$ for any non-exceptional $b$ (see Corollary \ref{twocritpts}). This generalizes prior work on unicritical polynomials in \cite[Section 3]{pagano} and \cite[Section 3]{ferraguti-pagano}.

Question \ref{rootsquest} asks when (a finite extension of) the field generated by $\phi^{-\infty}(b)$ can contain all of a certain class of special points. Recently there has been much work on the related question of determining when $\phi^{-\infty}(b)$ itself can contain special points. Andrews and Petsche \cite{petsche} ask under what conditions $\phi^{-\infty}(b) \subseteq K^\text{ab}$ when $K$ is a number field, $\phi \in K[x]$, and $K^\text{ab}$ is the maximal abelian extension of $K$. While we don't directly address this question, our Theorem \ref{rootsof1} gives conditions under which $K(\phi^{-\infty}(b)) \cap K^\text{ab}$ contains $K(\mum)$, and hence is an infinite extension of $K$. In a similar vein to the work of Andrews and Petsche, the authors of \cite{foz} consider when it is possible for a rational function $\phi \in K(x)$ to have $\phi^{-\infty}(b) \cap K^\text{cyc}$ infinite, where $K^\text{cyc}$ denotes the maximal cyclotomic extension of $K$. In \cite[Lemma 3.4]{foz} they show this can only occur when $\phi$ is a PCF map with all critical points strictly preperiodic, $\phi$ is conjugate to $x^{\pm d}$, or $\phi$ is conjugate to a Chebyshev polynomial.  

We remark that it might be interesting to study variants of Question \ref{rootsquest} for other classes of special points, such as $x$-coordinates of torsion points on an elliptic curve.

We use Theorem \ref{mainroots} to study ramification in $K(\phi^{-\infty}(b))$ when $K$ is a finite extension of $\Q_p$. Let $K$ be such an extension and $L$ a Galois extension of $K$ with $G = \Gal(L/K)$. Denote by $G^u$ the ramification group in the upper numbering corresponding to $u \in \mathbb{R}_{\geq 0}$ (see e.g. \cite[Section IV.3]{serrebook} or \cite{wildthings} for definitions). 
As a consequence of Theorem \ref{mainroots}, we obtain:
\begin{theorem} \label{mainram1}
Let $K$ be a finite extension of $\Q_p$, and assume that $\phi$ satisfies one of conditions (1)-(3) in Theorem \ref{mainroots} with $p \mid m$. Then for any non-exceptional $b \in \mathbb{P}^1(K)$, the groups $\Gal(K(\phi^{-\infty}(b))/K)^u$ are infinite for all $u \in \mathbb{R}^+$.
\end{theorem}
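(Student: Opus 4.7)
The plan is to combine Theorem \ref{mainroots} with the classical wild ramification of the cyclotomic $p$-power tower over $K$. Set $L := K(\phi^{-\infty}(b))$. Since $b \in \mathbb{P}^1(K)$, each preimage set $\phi^{-n}(b)$ is stable under $\Gal(\overline{K}/K)$, so $L/K$ is Galois. Theorem \ref{mainroots} supplies a finite extension $F/L$ with $F \supseteq K(\mum)$, and because $p \mid m$ we have $K(\mup) \subseteq K(\mum) \subseteq F$. Form the compositum $M := L \cdot K(\mup) \subseteq F$: as the compositum of two Galois extensions of $K$, $M/K$ is Galois, and $L \subseteq M \subseteq F$ forces $[M:L] \leq [F:L] < \infty$.

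The next ingredient is the classical fact that $\Gal(K(\mup)/K)^u$ is infinite for every $u \in \mathbb{R}^+$. This follows from two observations: $K(\mup)/K$ is arithmetically profinite, so every $\Gal(K(\mup)/K)^u$ is a closed subgroup of finite index; and $[K(\mup):K]$ is infinite, since $K(\mup)$ contains the cyclotomic $\Z_p$-extension of $K$. A closed subgroup of finite index in an infinite profinite group is itself infinite.

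Finally I would invoke Herbrand's theorem on compatibility of the upper numbering with quotients, interpreted in the infinite Galois setting via inverse limits. Applied to the surjection $\Gal(M/K) \twoheadrightarrow \Gal(K(\mup)/K)$, it sends $\Gal(M/K)^u$ onto the infinite group $\Gal(K(\mup)/K)^u$, so $\Gal(M/K)^u$ is infinite. Applied to $\Gal(M/K) \twoheadrightarrow \Gal(L/K)$, whose kernel $\Gal(M/L)$ is finite by the first paragraph, it sends $\Gal(M/K)^u$ onto $\Gal(L/K)^u$; the image of an infinite group under a map with finite kernel remains infinite, so $\Gal(L/K)^u$ is infinite for every $u \in \mathbb{R}^+$, as claimed. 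The argument has no real obstacle beyond correctly assembling the compositum $M$ and citing the two classical inputs (the cyclotomic ramification computation and the infinite-Galois version of Herbrand's theorem); all of the genuine dynamical content is packaged inside Theorem \ref{mainroots}.
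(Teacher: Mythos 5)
Your argument is correct and reaches the conclusion by essentially the same mechanism as the paper, with a modest streamlining of the intermediate bookkeeping. The paper's proof invokes Lemma \ref{unitylem}, which replaces $E$ by a Galois closure, writes $E=L(\alpha)$, passes to the compositum $LK'$ with $K'$ a finite Galois closure of $K(\alpha)$, and then appeals to the implication (2)$\Rightarrow$(1) of Lemma \ref{deepramground}. You instead form the compositum $M = L\cdot K(\mup)$ directly inside $F$ (automatically Galois over $K$, and of finite degree over $L$ because it sits inside $F$), and apply Herbrand's theorem twice: once to the quotient onto $\Gal(K(\mup)/K)$ to show $\Gal(M/K)^u$ is infinite, and once to the quotient onto $\Gal(L/K)$ with finite kernel $\Gal(M/L)$ to transfer infinitude down. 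This re-derives inline the easy direction (2)$\Rightarrow$(1) of Lemma \ref{deepramground} and sidesteps the primitive-element/Galois-closure maneuver. Both proofs thus rest on precisely the same two inputs: Theorem \ref{mainroots} to place $K(\mup)$ inside a finite extension of $L$, and the classical infinitude of the upper-numbering ramification groups for the $p^\infty$-cyclotomic tower.

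One small quibble about your justification of that classical input: deducing it by asserting $K(\mup)/K$ is APF is slightly off relative to the paper's own definition, which requires a \emph{totally wildly ramified} extension; $K(\mup)/K$ generally has a nontrivial tame quotient (the prime-to-$p$ part of an open subgroup of $\Z_p^\times$), so it does not fit that definition verbatim. The conclusion you want is still true, but the clean citation---and the one the paper uses---is the explicit ramification-group computation for $\Q_p(\zeta_{p^n})/\Q_p$ in \cite[Proposition IV.18]{serrebook}, passed to the limit. You should also be slightly more explicit that the infinite-Galois form of Herbrand's theorem you are invoking (that $\Gal(L/K)^u$ is the image of $\Gal(M/K)^u$ under the restriction map when $L\subseteq M$ are Galois over $K$) is exactly the content the paper encapsulates in Lemma \ref{tower}; it is standard, but it is doing real work in your second application.
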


Hajir, Aitken, and Maire \cite{hajir} were among the first to address questions about wild ramification in iterated extensions, and they were particularly interested in constructing infinite extensions of a number field that are  ramified over only finitely many primes, and with only tame ramification. To that end, they ask \cite[Question 7.1]{hajir}, whether there is a number field $K$, a map $\phi \in K(x)$ of degree $d \geq 2$, and $b \in \mathbb{P}^1$ such that $K(\phi^{-\infty}(b))$ is ramified over only finitely many primes of $K$, and unramified at all primes of $K$ dividing $d$. Such a map must be post-critically finite (PCF) by \cite[Theorem 5]{finiteram}, and in the same paper it is conjectured that the answer to \cite[Question 7.1]{hajir} is no \cite[Conjecture 6]{finiteram}. Quite recently this conjecture was proved for PCF polynomials of prime-power degree (see \cite[Theorem 4.3]{sing1} and also \cite[Proposition 2.1]{sing1}, which shows the hypothesis of monomial reduction is satisfied for PCF polynomials). Our Theorem \ref{mainram1} applies to some PCF maps, and thus proves new cases of \cite[Conjecture 6]{finiteram}, as it allows non-polynomial maps and polynomials that do not have prime-power degree.

In contrast to the conclusion of Theorem \ref{mainram1}, wild ramification need not be very deep for iterated extensions in general; see \cite[Theorem 5.11 and Example 5.12]{Poonen2}, where the higher ramification groups are trivial for $u$ sufficiently large. Nonetheless, the presence of $p$-power roots of unity is not required to produce deep ramification, and it is an interesting problem to determine which pairs $(\phi, b)$ have $\Gal(K(\phi^{-\infty}(b))/K)^u$ infinite for all $u \in \mathbb{R}^+$. 

Our last main result uses work of Cais, Davis, and Lubin \cite{CDL} to give a class of examples satisfying a stronger condition than all higher ramification groups being infinite. Let $K$ be a finite extension of $\Q_p$ with algebraic closure $\overline{K}$ and $L$ an infinite, totally wildly ramified Galois extension of $K$ with Galois group $G$. Then $L/K$ is \textit{arithmetically profinite (APF)} if $G^u$ is an open subgroup of $G$ for all $u \in \mathbb{R}^+$. More generally, if $L \subseteq \overline{K}$ is any infinite, totally wildly ramified extension of $K$, we say it is APF if the product 
$$
\Gal(\overline{K}/K)^u\Gal(\overline{K}/L)
$$
is an open subgroup of $\Gal(\overline{K}/K)$, for all $u \in \mathbb{R}^+$. If $L/K$ is APF and $E$ is any Galois extension of $K$ containing $L$, then
$\Gal(E/K)^u$ is infinite for all $u \in \mathbb{R}^+$ (see p. \pageref{apfdisc} for further discussion). Finally, we say that $K(\phi^{-\infty}(b))$ is \textit{branch-APF} over $K$ if there is a sequence $\{\alpha_n\}_{n \geq 0}$ with $\alpha_0 = b$ and $\phi(\alpha_n) = \alpha_{n-1}$ for all $n \geq 1$, such that $K(\alpha_1, \alpha_2, \ldots)$ is APF over $K$. 

Our result is the following, where $\overline{\phi}$ denotes the coefficient-wise reduction of $\phi$ modulo the maximal ideal of $K$. 
\begin{theorem} \label{mainapfintro}
Let $K$ be a finite extension of $\Q_p$ and $\phi \in K(x)$. Suppose that there exist $m, r \geq 1$ and $c \neq 0$ with 
$
\overline{\phi^m}(x) = cx^{p^r} 
$
and $\deg \phi^m = p^r$. Then $\phi^m$ has a unique fixed point $\gamma$ (resp. $\delta$) with positive (resp. negative) $p$-adic valuation.  Additionally, if we let $K' = K(\gamma, \delta)$,
then there is a unit $w \in \overline{K}$ such that $K'(w)(\phi^{-\infty}(b))$ is branch-APF over $K'(w)$, for any $b$ of the form
$$
\frac{\pi + w\gamma}{\delta^{-1}\pi + w},
$$ 
where $\pi$ is a uniformizer of $K'(w)$.
\end{theorem}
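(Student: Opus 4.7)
The plan is to reduce to the Cais--Davis--Lubin setting by conjugating $\phi^m$ so that $\gamma,\delta$ become $0,\infty$; analyzing the conjugate as a power series at $0$; applying their theorem to get an APF iterated preimage tower for $\phi^m$; and finally interleaving with $\phi$-preimages to obtain a genuine $\phi$-backward orbit with the same generated field.

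Over $K'$ define the M\"obius involution $M(x) = (x-\gamma)/(\delta^{-1}x - 1)$; it sends $\gamma\mapsto 0$ and $\delta\mapsto \infty$, with $M\circ M = \mathrm{id}$. Set $\psi := M\circ \phi^m \circ M$, a rational function of degree $p^r$ fixing both $0$ and $\infty$. Since $\overline{\gamma}=0$ and $v(\delta^{-1})>0$ force $\overline{M}(x) = -x$, the reduction is $\overline{\psi}(x) = \overline{M}(c\,\overline{M}(x)^{p^r}) = (-1)^{p^r+1}c\,x^{p^r}$. Expanding $\psi(x) = \sum_{i\geq 1} a_i x^i$ at $0$, we obtain $a_i\in\pi\mathcal{O}_{K'}$ for $1\leq i < p^r$ and $a_{p^r}\in\mathcal{O}_{K'}^{\times}$. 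A direct calculation also gives $M(b) = -\pi/u$, so after the change of coordinates the starting point has valuation $1$ in $K'(u)$.

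Now apply Cais--Davis--Lubin. Their theorem is designed for iterated preimage extensions of power series of the shape just produced; the unit $u\in \mathcal{O}_{\overline{K}}^{\times}$ is the free parameter we use to satisfy their finer hypotheses (e.g.\ conditions on the units appearing in the leading coefficients of $\psi$, or on the algebraic nature of the starting point). With $u$ chosen accordingly, set $\alpha_0 := M(b) = -\pi/u$ and pick $\alpha_n$ recursively with $\psi(\alpha_n)=\alpha_{n-1}$. The Newton polygon of $\psi(x)-\alpha_{n-1}$ has vertices $(0,v(\alpha_{n-1}))$ and $(p^r,0)$ with no lower intermediate points, so all $p^r$ near-zero roots share valuation $v(\alpha_{n-1})/p^r$; inductively $v(\alpha_n)=p^{-rn}$ and a coherent branch exists. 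CDL then yields that $K'(u)(\alpha_1,\alpha_2,\ldots)$ is APF over $K'(u)$.

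To return to $\phi$, set $\beta_0:=b$, $\beta_{mn}:=M(\alpha_n)$ for $n\geq 1$, and fill in $\beta_{mn-k}:=\phi^k(\beta_{mn})$ for $1\leq k < m$. Then $\phi(\beta_i) = \beta_{i-1}$ for every $i\geq 1$; since $\phi$ and $M$ are defined over $K'$, each $\beta_i$ lies in $K'(u)(\alpha_1,\alpha_2,\ldots)$, while conversely $\alpha_n = M(\beta_{mn})$ lies in $K'(u)(\beta_1,\beta_2,\ldots)$. Hence $K'(u)(\beta_1,\beta_2,\ldots) = K'(u)(\alpha_1,\alpha_2,\ldots)$ is APF, so $K'(u)(\phi^{-\infty}(b))$ is branch-APF over $K'(u)$. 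The main obstacle will be the verification of the exact CDL hypotheses on $\psi$: the monomial reduction $\overline{\psi}(x) = \pm c\,x^{p^r}$ and the Newton-polygon description of preimages are essential inputs, but their theorem may demand more refined control, such as conditions on $\psi$ modulo $\pi^2$, the existence of a linearizer at $0$ with prescribed behavior, or precise valuation equalities at each level of the tower. The existential choice of the unit $u\in\mathcal{O}_{\overline{K}}^{\times}$ is exactly the freedom used to align those refinements with the form $-\pi/u$ of the starting point; once this alignment is in place, the remainder of the argument is the formal translation described above.
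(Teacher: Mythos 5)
Your overall strategy matches the paper's: conjugate so that $\gamma,\delta$ go to $0,\infty$, apply the Cais--Davis--Lubin criterion to a branch of iterated preimages, and interleave with $\phi$-preimages to recover a genuine backward orbit. The coordinate computations you do carry out ($\overline{M}(x)=-x$, $M(b)=-\pi/u$, the Newton polygon showing each preimage step is totally ramified of degree $p^r$) are correct, and the interleaving at the end is fine.

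However, there is a genuine gap, and you have located it yourself but not closed it: you never verify the hypotheses of the CDL theorem, and those hypotheses are not just ``a power series with monomial reduction and a starting point of valuation $1$.'' The CDL criterion requires a \emph{norm-compatible} sequence of uniformizers $\pi_n$ whose minimal polynomials over $E_{n-1}$ are \emph{monic} with constant term exactly $(-1)^p\pi_{n-1}$ (plus intermediate coefficients of valuation uniformly bounded below). Your Newton-polygon argument only shows that a coherent branch $\alpha_n$ exists with $v(\alpha_n)=v(\alpha_{n-1})/p^r$; it does not show $N_{E_n/E_{n-1}}(\alpha_n)=\alpha_{n-1}$. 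Writing $\psi^m=f_1/g_1$ with $f_1(x)=\sum s_ix^i$, $g_1(x)=\sum t_ix^i$ (so $s_0=t_{p^r}=0$ after conjugation), the minimal polynomial of $\alpha_n$ is $f_1(x)-\alpha_{n-1}g_1(x)$, with leading coefficient $s_{p^r}$ and constant term $-\alpha_{n-1}t_0$. Norm compatibility therefore forces $s_{p^r}=t_0$, and this is precisely what the unit $u$ in the conjugation $\mu(x)=u(x-\gamma)/(1-\delta^{-1}x)$ is chosen to arrange --- it is a specific, computable unit (e.g.\ $u^{p^r-1}=a_{p^r}/g(\gamma)$ when $\delta=\infty$), not a free parameter to be adjusted against unspecified ``finer hypotheses.'' You also do not address the sign: even with $s_{p^r}=t_0=1$ the constant term is $-\alpha_{n-1}$, which matches $(-1)^p\pi_{n-1}$ only for odd $p$; for $p=2$ one must pass to the twisted polynomial $f_1(-x)+\pi_{n-1}g_1(-x)$, i.e.\ work with $-\pi_n$, which is why the paper tracks the factor $(-1)^{p+1}$ throughout. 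As written, your proof defers exactly the step that constitutes the substance of the theorem.
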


We consider $\infty$ to have negative valuation, so $\delta = \infty$ is allowed, and in this case we take $\delta^{-1} = 0$.  We may take $w  = 1$ when $\delta = \infty$ and the leading coefficient of the numerator of $\phi^m$ coincides with the denominator of $\phi^m$ evaluated at $\gamma$. In particular, we may take $w = 1$ when $\phi$ is a monic polynomial. In \cite{sing1}, Sing gives a detailed description of the higher ramification filtration associated to extensions of the form $K(\alpha_1, \alpha_2, \ldots)$ for certain post-critically bounded $\phi$ and any basepoint $b$. Our Theorem \ref{mainapfintro} applies to more maps, but with restricted basepoints; see Example \ref{nonPCB}.
Determining whether it is possible for the full extension $K(\phi^{-\infty}(b))$ to be APF is a very interesting open problem. 

Let us return now to PCF maps. The authors of \cite{hajir} already speculated that a negative answer to their Question 7.1 was likely, observing that experimentation suggests that $K(\phi^{-\infty}(b))$ is deeply ramified -- that is, all higher ramification groups are infinite -- at primes of residue characteristic dividing $d$.  For the PCF maps to which Theorem \ref{mainram1} applies, we confirm the authors' speculation.  Motivated partly by Theorem \ref{mainapfintro}, we conjecture that a still stronger property holds for any PCF map over $\Q_p$ whose degree is divisible by $p$:
\begin{conjecture}[Arboreal Sen Conjecture for PCF maps] \label{arbsen}
Let $K$ be a finite extension of $\Q_p$ and $\phi \in K(x)$ a PCF rational function of degree $d \geq 2$. If $p \mid d$ and $b \in \mathbb{P}^1(K)$ is not exceptional for $\phi$, then $K(\phi^{-\infty}(b))$ is a branch-APF extension of $K$.
\end{conjecture}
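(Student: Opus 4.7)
The plan is to prove the conjecture by extending Theorem \ref{mainapfintro} across all PCF maps with $p \mid d$, using that theorem as the base case and leveraging the rigidity imposed by the PCF condition. The overarching strategy is to show that every PCF rational function with $p$-divisible degree admits, after a finite base extension of $K$ and a suitable Möbius change of coordinates, an iterate $\phi^n$ whose reduction modulo the maximal ideal of $K$ satisfies the monomial reduction hypothesis $\overline{\phi^n}(x) = c x^{p^r}$ used in Theorem \ref{mainapfintro}. Applying Theorem \ref{mainapfintro} to $\phi^n$ then produces, for a suitable basepoint, a tower $K(\beta_1, \beta_2, \ldots)$ with $\phi^n(\beta_k) = \beta_{k-1}$ that is APF over $K$.

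I would carry this out case by case along the partial classifications of PCF maps. First, for $\phi$ falling under Theorem \ref{mainroots}(1) --- those Möbius-conjugate over $\overline{K}$ to a rational function in $x^m$ with $0$ and $\infty$ periodic --- a suitable iterate $\phi^n$ should have two fixed points of opposite-sign $p$-adic valuation together with monomial reduction, so Theorem \ref{mainapfintro} applies. Second, for Chebyshev maps $T_d$ and flexible Lattès maps, I would use the classical semiconjugacies $T_d(y + y^{-1}) = y^d + y^{-d}$ and $\phi \circ \pi = \pi \circ [d]_E$ (for $E$ the associated elliptic curve) to transport the branch-APF property from the already-understood power-map and elliptic towers. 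In each case one must additionally verify that the branch-APF tower for $\phi^n$ can be refined to one for $\phi$ itself, by choosing the interpolating preimages $\alpha_{kn+1}, \ldots, \alpha_{kn+n-1}$ so as to preserve APF at each stage; this should be feasible because the refinement adds only bounded-degree extensions per level, though verifying it requires an explicit analysis of how the ramification filtration behaves under such extensions.

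The main obstacle will be PCF maps that do not fall into any of the three structural classes of Theorem \ref{mainroots}. There exist PCF rational functions --- for instance those produced by Thurston's rigidity theorem --- whose reductions over $\Q_p$ are not of the form $cx^{p^r}$ for any iterate, and which are not semiconjugate to a power, Chebyshev, or Lattès map. For these maps, neither Theorem \ref{mainapfintro} nor the classical semiconjugacy techniques apply, and one would need a broader framework --- perhaps a generalization of the Cais--Davis--Lubin construction admitting arbitrary inseparable reductions, or a direct local analysis at each critical point using $p$-adic dynamics on the Berkovich projective line --- to conclude branch-APF. This is presumably why the statement remains a conjecture: every known proof of APF in iterated extensions routes through a power-map reduction somewhere, and genuinely new ideas appear to be required to handle PCF maps with non-monomial reductions.
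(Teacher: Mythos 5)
The statement you were given is not a theorem of the paper but its central open conjecture (the ``Arboreal Sen Conjecture for PCF maps''); the paper offers no proof, citing only partial evidence: Sing's result for PCF polynomials of prime-power degree under a technical hypothesis, and the paper's own Theorem \ref{mainapfintro}, which covers certain maps but only for restricted basepoints. So there is no proof to compare yours against, and --- as you candidly admit in your final paragraph --- your proposal is a strategy sketch rather than a proof, since it leaves untreated exactly the PCF maps that do not reduce to the power/Chebyshev/Latt\`es framework.

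Even the cases you claim to handle contain genuine gaps. First, Theorem \ref{mainapfintro} establishes branch-APF only for basepoints of the special form $(\pi + u\gamma)/(\delta^{-1}\pi + u)$ with $\pi$ a uniformizer of $K'(u)$, whereas the conjecture demands the conclusion for \emph{every} non-exceptional $b \in \mathbb{P}^1(K)$; passing from these special basepoints to arbitrary ones is a substantial open problem, not a routine refinement. Second, belonging to class (1) of Theorem \ref{mainroots} --- being M\"obius-conjugate to a map in $x^m$ with $0$ and $\infty$ periodic --- is a statement about the global form of $\phi$ over $\overline{K}$ and does not imply that any iterate has reduction $c x^{p^r}$ modulo $\mathfrak{p}$ (nor even that $\phi$ has good reduction), so your first case does not automatically feed into Theorem \ref{mainapfintro}. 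Third, the Chebyshev and Latt\`es towers are not ``already understood'' to be branch-APF: the paper proves only that the corresponding iterated extensions contain $\mup$ up to finite extension, which yields infinite higher ramification groups (Theorem \ref{mainram1}) but is strictly weaker than the APF property. Your outline is a reasonable account of why the conjecture is plausible and where the difficulty lies, but each stage requires new mathematics, which is precisely why the statement remains a conjecture.
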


Sen's celebrated theorem \cite{sen} shows that a Galois extension of local fields of characteristic zero with $p$-adic analytic Galois group $G$ must be APF. Indeed, Sen shows that the higher ramification filtration on $G$ is tightly connected to the natural filtration on $G$ arising from it being a $p$-adic Lie group. As noted above, a direct translation of Sen's theorem to the setting of general iterated extensions does not hold (\cite[Example 5.12]{Poonen2}). However, ramification in iterated extensions is closely linked to the forward orbits of the critical points, and heuristically the periodicity of the critical points of a PCF map suggests a compounding of ramification that should lead to branch-APF extensions. Moreover, evidence for Conjecture \ref{arbsen} comes from work of Sing \cite[Theorem 4.3, Proposition 2.1]{sing1}, who proves the conjecture for PCF polynomials of prime power degree that satisfy a technical hypothesis ($p \nmid d$ in the notation of \cite{sing1}). Sing proves her result in much the way that Sen did: by showing that the higher ramification filtration and a natural filtration arising from iteration are closely linked. 

An outline of the paper is as follows. In Section \ref{unity} we study roots of unity in iterated extensions and prove Theorem \ref{mainroots}, in Section \ref{preliminaries} we prove Theorem \ref{mainram1}, and in Section \ref{apf} we prove Theorem \ref{mainapfintro}.

\section{Roots of unity in iterated extensions} \label{unity}

In this section we study Question \ref{rootsquest}, and give a partial answer in the form of a proof of Theorem \ref{mainroots}. Theorem \ref{mainroots} shows that Question \ref{rootsquest} has a positive answer for the usual suspects: when $\phi$ is a power map, Chebyshev polynomial, or Latt\`es map, and $b$ is any basepoint save for the necessarily excluded exceptional case.  However, there is at least one other class of $\phi$ for which Question \ref{rootsquest} has a positive answer, namely certain maps that factor through a power map. Witness two motivating examples, which come in the work of Pink \cite{pink2} and Ahmed et al. \cite{arithbas}. Pink studies the situation where $k$ is any field of characteristic not equal to 2, $t$ is transcendental over $k$, $K= k(t)$, and $b = t$. If $\phi(x) = x^2 + c \in k[x]$ has $\phi^n(0) = 0$ for some $n \geq 1$, then \cite[Theorem 2.8.4]{pink2} implies that $k(\mutwo)$ is contained in $K(\phi^{-\infty}(b))$. Ahmed et al. study the setting where $K = \Q$, $\phi(x) = x^2 - 1$, and $b \in \Q \setminus \{-1,0\}$, and they show that $\Q(\mutwo)$ is contained in $K(\phi^{-\infty}(b))$ \cite[Lemma 1.4]{arithbas}.  

We begin by generalizing these results in Subsection \ref{power}, and in the process shed some light on why periodicity of the critical point is a vital assumption. In Subsection \ref{special}, we treat the case of Chebyshev polynomials and Latt\`es maps. One of the strengths of the results in this section is that they hold for all non-exceptional choices of $b$. 

A final observation before we get to the proofs: an affirmative answer to Question \ref{rootsquest} is equivalent to the degree  $[K(\mum) : K(\phi^{-\infty}(b)) \cap K(\mum)]$ being finite.  Indeed, it is clear that $K(\mum)$ is contained in a finite extension of $K(\phi^{-\infty}(b))$ if and only if $F:= K(\mum)K(\phi^{-\infty}(b))$ is a finite extension of $K(\phi^{-\infty}(b))$.  But $\Gal(F/K(\phi^{-\infty}(b)))$ is isomorphic to $\Gal(K(\mum) / (K(\phi^{-\infty}(b)) \cap K(\mum)))$, showing the non-trivial direction of the equivalence.

\subsection{Maps that factor through a power map} \label{power}

\begin{theorem} \label{rootsof1}
Let $K$ be a field and $\phi \in K(x)$ a rational function of degree $d \geq 2$, where $d$ is not divisible by the characteristic of $K$. Suppose that $\phi \in K(x^m)$ for some integer $m \geq 2$ (necessarily with $m \mid d)$. Assume further that $\phi(0) = 0$ and $\phi(\infty) = \infty$. 
Then for any $b \in \mathbb{P}^1(K)$, we have $K(\mum) \subseteq K(\phi^{-\infty}(b))$ unless $b \in \{0, \infty\}$ and $\phi^{-1}(b) = \{b\}$.
\end{theorem}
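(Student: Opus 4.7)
The plan is to work directly with the factorization $\phi(x) = \psi(x^m)$ and to build, by induction on $n$, a distinguished element $\alpha_n \in K(\phi^{-n}(b))$ whose $m^{n-1}$-th power is a fixed $K^\times$-multiple of a chosen nonzero level-one preimage $\alpha_1 \in \phi^{-1}(b)$. I will run the construction twice---once from $\alpha_1$, once from $\zeta\alpha_1$ for a primitive $m$-th root of unity $\zeta$---and dividing will produce a primitive $m^n$-th root of unity inside $K(\phi^{-n}(b))$.

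The backbone will be a Vieta-type identity. Because $\psi(0) = 0$ and $\psi(\infty) = \infty$ pin down the constant and leading coefficients of the numerator and denominator of $\psi(y) - \alpha$, I will check that for every $\alpha \in \mathbb{P}^1 \setminus \{0,\infty\}$ the product $\prod_{\gamma \in \psi^{-1}(\alpha)} \gamma$ equals $c\alpha$ for some constant $c \in K^\times$ depending only on $\psi$. Choosing one representative $\epsilon_j$ from each $\boldsymbol\mu_m$-orbit in $\phi^{-1}(\alpha)$---so an $m$-th root of each $\gamma_j \in \psi^{-1}(\alpha)$---and setting $\xi(\alpha) := \prod_j \epsilon_j \in K(\phi^{-1}(\alpha))$ then yields an element with $\xi(\alpha)^m = c\alpha$.

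The non-exceptional hypothesis lets me pick $\alpha_1 \in \phi^{-1}(b)$ with $\alpha_1 \neq 0,\infty$; since $\phi$ fixes $0$ and $\infty$, no intermediate preimage appearing in the construction can equal $0$ or $\infty$ either. Set $\alpha_2 := \xi(\alpha_1)$, a product of $d' := \deg \psi$ preimages of $\alpha_1$, and for $n \geq 3$ set $\alpha_n := \prod_{\epsilon} \xi(\epsilon)$, where $\epsilon$ ranges over the $(d')^{n-2}$ preimages appearing in the product for $\alpha_{n-1}$. Applying $\xi(\epsilon)^m = c\epsilon$ factor by factor yields
\[
\alpha_n^m \;=\; c^{(d')^{n-2}}\, \alpha_{n-1}.
\]
Since $\phi(\zeta x) = \phi(x)$ for $\zeta \in \boldsymbol\mu_m$, the element $\alpha_1' := \zeta\alpha_1$ also lies in $\phi^{-1}(b)$; running the entirely parallel construction starting from $\alpha_1'$ produces $\alpha_n' \in K(\phi^{-n}(b))$ satisfying $\alpha_n'^m = c^{(d')^{n-2}}\, \alpha_{n-1}'$. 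Setting $\omega_n := \alpha_n'/\alpha_n$ and dividing the two recursions gives $\omega_n^m = \omega_{n-1}$ with $\omega_1 = \zeta$, so $\omega_n^{m^{n-1}} = \zeta$ is a primitive $m$-th root of unity. This forces $\omega_n \in K(\phi^{-n}(b))$ to have order exactly $m^n$, so $\boldsymbol\mu_{m^n} \subseteq K(\phi^{-n}(b))$, and taking $n \to \infty$ gives $K(\mum) \subseteq K(\phi^{-\infty}(b))$.

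The main technical point requiring care will be the Vieta identity $\prod_\gamma \gamma = c\alpha$ in the rational (not merely polynomial) setting: the hypotheses $\psi(0) = 0$ and $\psi(\infty) = \infty$ are precisely what enforce the linear scaling in $\alpha$, and I will need to track the ramification of $\psi$ at $0$ and $\infty$ carefully when counting preimages with multiplicity. Beyond that the argument is a formal manipulation of the recursion, together with the observation that the stated exception really does force $\phi^{-\infty}(b) = \{b\}$, so that no nontrivial roots of unity can appear.
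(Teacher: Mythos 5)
Your proposal is correct and follows essentially the same route as the paper: the Vieta identity $\prod_{\gamma \in \psi^{-1}(\alpha)}\gamma = c\alpha$ (the paper's computation with $h(x) = f(x) - \alpha g(x)$), the $\boldsymbol\mu_m$-invariance of fibers to extract $m$th roots as products of preimages, the quotient trick to cancel the constant $c$, and the seed $\zeta = \zeta\alpha_1/\alpha_1$. Your explicit recursion $\omega_n^m = \omega_{n-1}$ is just an unpacked version of the paper's statement that its set $R$ of balanced quotients is closed under taking $m$th roots and contains $\zeta$.
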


\begin{proof}
Let $f,g \in K[x]$ be relatively prime polynomials with $f$ monic and $\phi = f/g$. The hypotheses of the theorem give that
\begin{enumerate}
    \item $f, g \in K[x^m]$;
    \item $f(0) = 0$;
    \item $\deg f > \deg g$.
\end{enumerate}
Let $B = \bigcup_{n \geq 1} \phi^{-n}(b)$ be the backwards orbit of $b$ under $\phi$, and observe that $K(\phi^{-\infty}(b)) = K(B)$. Put
$$R = \left\{\frac{\alpha_1 \alpha_2 \cdots \alpha_j}{\beta_1\beta_2 \cdots \beta_j} : \alpha_i, \beta_i \in B \setminus \{0, \infty\} \right\}.$$
Note that to be in $R$, the product in the numerator must have the same number of terms as the product in the denominator. We claim that every element of $R$ has an $m$th root that is also in $R$. Let $\alpha \in B \setminus \{0, \infty\}$, and note that $B \setminus \{0, \infty\}$ contains the solutions to $\phi(x) - \alpha = 0$, and hence contains the roots of $h(x) := f(x) - \alpha g(x)$. Letting $g(0) = c$, we have by hypothesis that $h(0) = f(0) - \alpha g(0) = -c \alpha$. Note that $c \neq 0$ because $f(0) = 0$ and $f$ and $g$ are relatively prime. Moreover, $h$ must be monic, because $f$ is monic and $\deg f > \deg g$. It follows that 
\begin{equation} \label{prod1}
\prod_{\gamma : h(\gamma) = 0} \gamma = (-1)^d h(0) = (-1)^{d+1} c \alpha.
\end{equation}

Because $K$ has characteristic not dividing $d$ and hence not dividing $m$, the polynomial $x^m - 1$ has $m$ distinct roots in $\overline{K}$, and there is $\zeta \in \overline{K}$ with $\zeta^m = 1$ and $\zeta^i \neq 1$ for all $i < m$. 
By assumption $h \in K[x^m]$, and so the roots of $h$ are invariant under multiplication by $\zeta$. Indeed, multiplication by $\zeta$ gives an action of the cyclic group of order $m$ on the roots of $h$, and each orbit of this action has $m$ elements. Let $\gamma_1, \ldots, \gamma_{d/m}$ be a choice of one element from each orbit. For each $i$, the product of the elements in the orbit of $\gamma_i$ is
$$
\prod_{j = 1}^m \zeta^j \gamma_i = \zeta^{m(m-1)/2} \gamma_i^m = (-1)^{m+1}\gamma_i^m.
$$
Therefore
$$
\prod_{\gamma : h(\gamma) = 0} \gamma = (-1)^{d(m+1)/m}\left( \prod_{i=1}^{d/m} \gamma_i \right)^m,
$$
and \eqref{prod1} then gives
\begin{equation} \label{prod2}
(-1)^{(m+d)/m}c\alpha =  \left( \prod_{i=1}^{d/m} \gamma_i \right)^m,
\end{equation}
showing that $\sqrt[m]{(-1)^{(m+d)/m}c\alpha}$ is a product of $d/m$ elements of $B$. If $\beta$ is another element of $B \setminus \{0, \infty\}$, then we have a similar expression for $\beta$ as \eqref{prod2}, which yields
\begin{equation} \label{quotient}
\sqrt[m]{\frac{\alpha}{\beta}} = \frac{\sqrt[m]{(-1)^{(m+d)/m}c\alpha}}{\sqrt[m]{(-1)^{(m+d)/m}c\beta}} \in R.
\end{equation}

From \eqref{quotient} and the definition of $R$, it follows that for every $r \in R$ there is $s \in R$ with $s^m = r$.  To complete the proof, it is sufficient to show that $\zeta \in R$. Indeed, we show that $\zeta$ is a quotient of elements of $\phi^{-1}(b)$. First note that $\phi^{-1}(b) \setminus \{0, \infty\}$ is non-empty, for otherwise the assumptions $\phi(0) = 0$ and $\phi(\infty) = \infty$ imply that $b \in \{0, \infty\}$ and $\phi^{-1}(b) = \{b\}$.
Thus we take $\alpha \in \phi^{-1}(b) \setminus \{0, \infty\}$. Because $\phi \in K(x^m)$, we have $\phi(\zeta \alpha) = \phi(\alpha) = b$. Therefore $\zeta = \zeta \alpha / \alpha$, as desired. 
\end{proof}

To streamline the statement of our next result, we recall that a set $E \subset \mathbb{P}^1(\overline{K})$ is an \textit{exceptional set for $\phi$} if $E$ is finite and $\phi^{-1}(E) = E$. We further say that $b \in \mathbb{P}^1(\overline{K})$ is exceptional for $\phi$ if $b$ belongs to an exceptional set for $\phi$ \label{exceptionaldef}. Note that if $b$ is an exceptional point for $\phi$, then $K(\phi^{-\infty}(b))$ is a finite extension of $K$. Exceptional points are rare for maps defined over $\C$: by \cite[Theorem 1.6]{jhsdynam}, if $E$ is a non-empty exceptional set for $\phi \in \C(x)$ then $E$ has at most two elements, and if $\#E = 1$ then $\phi$ is conjugate over $\overline{K}$ to a polynomial, while if $\#E = 2$ then $\phi$ is conjugate over $\overline{K}$ to $x^{-d}$.

\begin{corollary} \label{deep periodic}
Let $K$ be a field and $\phi \in K(x)$ a rational function of degree $d \geq 2$, where $d$ is not divisible by the characteristic of $K$. Suppose that $\phi \in K(x^m)$ for some integer $m \geq 2$, and that $0$ and $\infty$ are periodic under $\phi$. Then $K(\mum) \subseteq K(\phi^{-\infty}(b))$ unless $b \in \{0, \infty\}$ and $b$ is exceptional for $\phi$.
\end{corollary}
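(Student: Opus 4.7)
The idea is to reduce to Theorem \ref{rootsof1} by passing to a suitable iterate of $\phi$. Let $r$ and $s$ denote the periods of $0$ and $\infty$ under $\phi$, and set $N = \mathrm{lcm}(r,s)$, so that $\psi := \phi^N$ fixes both $0$ and $\infty$. I would first check that $\psi$ also lies in $K(x^m)$: writing $\phi(x) = g(x^m)$ with $g \in K(y)$, an induction on $n$ shows $\phi^n(x) = g_n(x^m)$ for some $g_n \in K(y)$, since
\[
\phi^{n+1}(x) \;=\; g\bigl(\phi^n(x)^m\bigr) \;=\; g\bigl(g_n(x^m)^m\bigr).
\]
Because $\deg \psi = d^N \geq 2$ is coprime to $\mathrm{char}(K)$, Theorem \ref{rootsof1} applies to $\psi$ with any basepoint, yielding $K(\mum) \subseteq K(\psi^{-\infty}(b)) \subseteq K(\phi^{-\infty}(b))$ for every $b \in \mathbb{P}^1(K)$, unless $b \in \{0,\infty\}$ and $\psi^{-1}(b) = \{b\}$.

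It then suffices to show that this exceptional case forces $b$ to be exceptional for $\phi$ in the sense defined before the corollary. Iterating $\psi^{-1}(b) = \{b\}$ gives $\psi^{-k}(b) = \{b\}$, i.e.\ $\phi^{-Nk}(b) = \{b\}$, for every $k \geq 0$. For any $j$ with $0 \leq j < N$, any $y \in \phi^{-(Nk+j)}(b)$ satisfies $\phi^j(y) \in \phi^{-Nk}(b) = \{b\}$, and hence $y \in \phi^{-j}(b)$. Therefore
\[
\phi^{-\infty}(b) \;=\; \bigcup_{j=0}^{N-1} \phi^{-j}(b)
\]
is a finite union of finite sets, so $b$ is exceptional for $\phi$. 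Contrapositively, if $b \notin \{0,\infty\}$, or if $b \in \{0,\infty\}$ is non-exceptional, then the exception in Theorem \ref{rootsof1} fails for $\psi$ at $b$, and the desired inclusion follows.

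The argument is essentially bookkeeping; the two places that require a moment's thought are the stability of $K(x^m)$ under iteration of $\phi$, and the translation between the literal exception ``$\psi^{-1}(b) = \{b\}$'' and $b$ being exceptional for $\phi$. The latter — showing that total collapse of preimages under a single iterate $\psi$ forces the entire backward orbit under $\phi$ to lie in the first $N$ layers — is the mild technical heart of the reduction, and everything else is an immediate appeal to the previous theorem.
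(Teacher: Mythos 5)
Your proposal is correct and follows essentially the same route as the paper: pass to an iterate $\phi^N$ fixing both $0$ and $\infty$, observe that it still lies in $K(x^m)$, apply Theorem \ref{rootsof1}, and convert its exceptional case into exceptionality of $b$ for $\phi$. The only cosmetic difference is in that last step: the paper exhibits the forward orbit $\{b, \phi(b), \ldots, \phi^{N-1}(b)\}$ as an exceptional set (citing Silverman), whereas you verify directly that $\phi^{-\infty}(b)$ is finite, which is the equivalent characterization recalled in the introduction.
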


\begin{proof}
The hypotheses imply that there is $n \geq 1$ with $\phi^n(0) = 0$ and $\phi^n(\infty) = \infty$. Moreover, $\phi \in K(x^m)$ implies that $\phi^n \in K(x^m)$.   Theorem \ref{rootsof1} then gives $K(\mum) \subseteq K(\phi^{-\infty}(b))$ unless $b \in \{0, \infty\}$ and $(\phi^n)^{-1}(b) = \{b\}$. 
This last equality implies that
$$
E = \{b, \phi(b), \phi^2(b), \ldots, \phi^{n-1}(b)\}
$$
is an exceptional set for $\phi$ (cf. the proof of \cite[Theorem 1.7]{jhsdynam}), whence $b$ is exceptional for $\phi$. 
\end{proof}

We can say more precisely when various roots of unity appear in the tower of fields $K(\phi^{-n}(b))$. Under the hypotheses of Theorem \ref{rootsof1}, it follows from the proof of that theorem that $\zeta_{m^j} \in K(\phi^{-j}(b))$ for each $j \geq 1$. Thus under the hypotheses of Corollary \ref{deep periodic} we have that 
\begin{equation} \label{rootsappear}
 \zeta_{m^j} \in K(\phi^{-rj}(b)),   
\end{equation} 
where $r$ is the least common multiple of the periods of $0$ and $\infty$. As an illustration, in \cite[Lemma 1.4]{arithbas} it's shown that for $K = \Q$, $\phi(x) = x^2 - 1$, and $b \in \Q \setminus \{-1, 0\}$, 
$$\zeta_{2^j} \in K(\phi^{-(2j-1)}(b)).$$
This is consistent with \eqref{rootsappear}, though it shows that the quantity $-rj$ is not necessarily minimal. 

We now give a result about roots of unity and preimage fields for maps that are only conjugate to those where Corollary \ref{deep periodic} applies. As a preliminary observation, note that if $\mu \in K(x)$ is a M\"obius transformation and $\phi \in K(x)$, and we set $\psi = \mu \circ \phi \circ \mu^{-1}$, then $\phi^n(\alpha) = b$ if and only if $\psi^n(\mu(\alpha)) = \mu(b)$. It follows that 
\begin{equation} \label{conjeq}
K(\phi^{-\infty}(b)) = K(\psi^{-\infty}(\mu(b)))
\end{equation}
Observe that \eqref{conjeq} only holds under the assumption that $\mu$ is defined over the ground field $K$. If the coefficients of $\mu$ lie in a non-trivial extension of $K$, then $K(\phi^{-\infty}(b))$ and $K(\psi^{-\infty}(\mu(b)))$ are not necessarily identical extensions of $K$; for instance in this case there is no guarantee that $\mu(b) \in K(\phi^{-\infty}(b))$. In this case we must enlarge $K$ to include the coefficients of $\mu$ in order to obtain \eqref{conjeq}. That is the approach taken in the proof of the next result.

\begin{corollary} \label{conjunity}
Let $K$ be a field and $\phi \in K(x)$ a rational function of degree $d \geq 2$, where $d$ is not divisible by the characteristic of $K$. Suppose that there are periodic points $\alpha$ and $\beta$ in $\mathbb{P}^1(\overline{K})$ and $m \geq 2$ such that 
\begin{equation} \label{conjhyp}
\mu \circ \phi \circ \mu^{-1} \in \overline{K}(x^m),
\end{equation}
where $\mu$ is a M\"obius transformation defined over $K(\alpha, \beta)$ with $\mu(\alpha) = 0$ and $\mu(\beta) = \infty$. Assume that $b$ is not exceptional for $\phi$. Then the compositum of $K(\alpha, \beta)$ and $K(\phi^{-\infty}(b))$ contains $K(\mum)$. 
\end{corollary}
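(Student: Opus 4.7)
The plan is to reduce to Corollary \ref{deep periodic} by working with the conjugate map $\psi = \mu \circ \phi \circ \mu^{-1}$ over the enlarged ground field $L := K(\alpha, \beta)$. Because $\mu$ has coefficients in $L$ and $\phi \in K(x) \subseteq L(x)$, the conjugate $\psi$ lies in $L(x)$. Combined with the hypothesis $\psi \in \overline{K}(x^m)$, writing $\psi = f/g$ with $f,g \in L[x]$ coprime and $f$ monic, and using the uniqueness of this presentation, forces $f, g \in L[x^m]$; in particular $\psi \in L(x^m)$.

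Next I transfer the dynamical data through $\mu$. Conjugation shows $\deg \psi = d$, which remains coprime to $\char K$. Since $\mu(\alpha) = 0$ and $\mu(\beta) = \infty$, and $\alpha, \beta$ are periodic under $\phi$, the points $0$ and $\infty$ are periodic under $\psi$. Moreover, since Möbius conjugation carries exceptional sets for $\phi$ bijectively to exceptional sets for $\psi$, the hypothesis that $b$ is not exceptional for $\phi$ gives that $\mu(b)$ is not exceptional for $\psi$; in particular the excluded case of Corollary \ref{deep periodic} (namely $\mu(b) \in \{0, \infty\}$ with $\mu(b)$ exceptional) does not occur.

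With all the hypotheses of Corollary \ref{deep periodic} verified for $\psi$ over $L$, applying that corollary yields
$$
L(\mum) \subseteq L(\psi^{-\infty}(\mu(b))).
$$
Because $\mu$ is defined over $L$, the identity \eqref{conjeq} applies over $L$ (rather than $K$) and gives $L(\psi^{-\infty}(\mu(b))) = L(\phi^{-\infty}(b))$. Combining, $L(\mum) \subseteq L(\phi^{-\infty}(b))$, which is precisely the compositum of $K(\alpha, \beta)$ and $K(\phi^{-\infty}(b))$, as required.

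The only mildly subtle step is the descent $\psi \in \overline{K}(x^m) \cap L(x) \Rightarrow \psi \in L(x^m)$, which I would record explicitly but expect to dispatch with a one-line uniqueness argument. Everything else is formal — the real content already lives in Corollary \ref{deep periodic}, and the present corollary simply says that that result is invariant under Möbius conjugation, provided $K$ is first enlarged so that the conjugating map becomes defined over the base.
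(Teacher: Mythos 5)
Your proof is correct and follows the same route as the paper's: conjugate to $\psi = \mu\circ\phi\circ\mu^{-1}$ over $F = K(\alpha,\beta)$, invoke Corollary~\ref{deep periodic}, and transfer back via \eqref{conjeq} together with the invariance of exceptionality under conjugation. The one point you spell out that the paper passes over silently is the descent $\psi\in\overline{K}(x^m)\cap F(x)\Rightarrow\psi\in F(x^m)$, which is indeed needed to apply Corollary~\ref{deep periodic} and is correctly handled by your uniqueness-of-reduced-form argument.
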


\begin{proof}
Let $F = K(\alpha, \beta)$ and $\psi = \mu \circ \phi \circ \mu^{-1}$.  Because both $\psi$ and $\mu$ are defined over $F$, we have from \eqref{conjeq} that  $F(\phi^{-\infty}(b)) = F(\psi^{-\infty}(\mu(b))$.
Now $0$ and $\infty$ are periodic points for $\psi$, and together with \eqref{conjhyp} this means we may apply Corollary \ref{deep periodic} to $\psi$. Hence $F(\mum) \subseteq F(\psi^{-\infty}(\mu(b)) = F(\phi^{-\infty}(b))$, unless $\mu(b)$ is exceptional for $\psi$. Being an exceptional point is preserved under conjugation, so $\mu(b)$ is exceptional for $\psi$ if and only if $b$ is exceptional for $\phi$, contrary to hypothesis. 
Therefore Corollary \ref{deep periodic} gives $F(\mum) \subseteq F(\phi^{-\infty}(b))$, as desired.
\end{proof}

We now give an application to certain bicritical rational maps, that is, those with precisely two critical points in $\mathbb{P}^1(\overline{K})$. Note that any polynomial with precisely one critical point in $\overline{K}$ (i.e. a unicritical polynomial) is also a bicritical rational map. 

\begin{corollary} \label{twocritpts}
Let $K$ be a field and $\phi \in K(x)$ a rational function of degree $d \geq 2$, where $d$ is not divisible by the characteristic of $K$. Suppose that $\phi$ is bicritical with critical points $\alpha$ and $\beta$, both of which are periodic under $\phi$. Assume that $b$ is not exceptional for $\phi$. Then the compositum of $K(\alpha, \beta)$ and $K(\phi^{-\infty}(b))$ contains $K(\mud)$. \end{corollary}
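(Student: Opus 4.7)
The plan is to apply Corollary \ref{conjunity} with $m = d$. To set up, I construct a Möbius transformation $\mu$ with $\mu(\alpha) = 0$ and $\mu(\beta) = \infty$, defined over $K(\alpha,\beta)$: take $\mu(x) = (x-\alpha)/(x-\beta)$ when neither point is $\infty$, with the obvious modification otherwise. Setting $\psi = \mu \circ \phi \circ \mu^{-1}$, what remains is to verify the central hypothesis of Corollary \ref{conjunity}, namely that $\psi \in \overline{K}(x^d)$.

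This reduces to a structural classification of bicritical rational maps of degree $d$ with critical points at $0$ and $\infty$. Since Möbius conjugation preserves critical points and multiplicities, $\psi$ is itself bicritical of degree $d$, with critical points exactly at $0$ and $\infty$. By Riemann-Hurwitz, the total critical multiplicity equals $2d-2$, so each of $0$ and $\infty$ must have multiplicity $d-1$; equivalently, $\psi$ is totally ramified above each of $\psi(0)$ and $\psi(\infty)$. Writing $\psi = f/g$ in lowest terms, I would then case-split on whether $\psi(0)$ and $\psi(\infty)$ are finite. In the main case, if $\psi(0) = a$ and $\psi(\infty) = b$ are both finite (and necessarily distinct, since total ramification forces $\psi^{-1}(a) = \{0\}$, leaving no room for $\infty$), the ramification conditions translate to $f - ag = c_1 x^d$ and $f - bg = c_2 \in \overline{K}$, from which both $f$ and $g$ are $\overline{K}$-linear combinations of $x^d$ and $1$, so $\psi \in \overline{K}(x^d)$. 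The subcases where exactly one of $\psi(0), \psi(\infty)$ equals $\infty$ yield $\psi$ of shape $c x^d + a$, $b + c/x^d$, or $c/x^d$, all visibly in $\overline{K}(x^d)$; the subcase $\psi(0) = \psi(\infty) = \infty$ is impossible by the same preimage-counting argument.

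Once $\psi \in \overline{K}(x^d)$ is in hand, every hypothesis of Corollary \ref{conjunity} is verified: $\alpha, \beta$ are periodic by assumption, $\mu$ is defined over $K(\alpha,\beta)$, we have $\mu(\alpha)=0$ and $\mu(\beta)=\infty$, and $b$ is non-exceptional. Applying it with $m = d$ delivers the desired conclusion that the compositum of $K(\alpha,\beta)$ and $K(\phi^{-\infty}(b))$ contains $K(\mud)$. The main technical obstacle is the structural case analysis in the middle paragraph; once Riemann-Hurwitz has pinned down the critical multiplicities, this becomes a matter of unwinding what total ramification above two specified points says about the numerator and denominator of $\psi$.
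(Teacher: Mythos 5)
Your proposal is correct and follows essentially the same route as the paper: conjugate the two critical points to $0$ and $\infty$ over $K(\alpha,\beta)$, use Riemann--Hurwitz to conclude both have ramification index $d$, deduce that $\psi = \mu\circ\phi\circ\mu^{-1}$ has the form $(c_1x^d+c_2)/(c_3x^d+c_4)$, and apply Corollary \ref{conjunity}. Your middle paragraph simply spells out in more detail the step the paper states as ``it follows that $\psi$ has the form $(c_1x^d+c_2)/(c_3x^d+c_4)$,'' and that case analysis is accurate.
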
 

\begin{proof}
Let $\alpha$ and $\beta$ be the two critical points of $\phi$, which are by hypothesis periodic. By the Riemann-Hurwitz theorem, both $\alpha$ and $\beta$ must have ramification index $d$. We may apply a conjugation $\mu$ defined over $K(\alpha, \beta)$ that takes $\alpha$ to $0$ and $\beta$ to $\infty$, as in Corollary \ref{conjunity}, and take $\psi = \mu \circ \phi \circ \mu^{-1}$. Both $0$ and $\infty$ are points of ramification index $d$ for $\psi$, and it follows that $\psi$ has the form $(c_1x^d + c_2)/(c_3x^d + c_4)$ for some $c_1, c_2, c_3, c_4 \in K(\alpha, \beta)$. An application of Corollary \ref{conjunity} completes the proof.
\end{proof}

\begin{example}[Quadratic maps] Let $K$ be a field of characteristic different from 2. If $\phi \in K(x)$ has degree 2, then it is automatically bicritical. Let $\alpha, \beta \in \mathbb{P}^1(\overline{K})$ be the critical points of $\phi$. If both $\alpha$ and $\beta$ are periodic, then Corollary \ref{twocritpts} shows that $K(\mutwo)$ lies in the compositum of $K(\alpha, \beta)$ and $K(\phi^{-\infty}(b))$. The case of $K = \Q$ and $\phi(x) = x^2 - 1$, studied in \cite[Section 1]{arithbas}, is an example of this, with its two critical points $\infty$ and $0$ being periodic with periods one and two, respectively. 

For many quadratic rational functions with periodic critical points, we have $K(\alpha, \beta) = K$. If either $\alpha$ or $\beta$ is $\infty$, then the other lies in $K$ and $K(\alpha, \beta) = K$ holds. If $\{\alpha, \beta\} \cap \{\infty\} = \emptyset$, then $\alpha$ and $\beta$ are roots of a quadratic polynomial $K[x]$, and thus either lie in $K$ or are Galois conjugate over $K$. In the latter case, the Galois conjugacy extends to the orbits of $\alpha$ and $\beta$, and in particular both $\alpha$ and $\beta$ must have the same period. 
\end{example}

\subsection{Chebyshev and Latt\`es maps} \label{special}

Let $K$ be a field and $d \geq 2$ an integer not divisible by the characteristic of $K$. Recall that the Chebyshev polynomial $T_d$ of degree $d$ is the map given by the commutative diagram
\begin{equation} \label{comm}
\begin{tikzpicture}[baseline=(current  bounding  box.center)]
    % set up the nodes
    \node (LL) at (0,0) {$\mathbb{P}^1$};
    \node (UR) at (3,2) {$\mathbb{P}^1$};
    \node (UL) at (0,2) {$\mathbb{P}^1$};
    \node (LR) at (3,0) {$\mathbb{P}^1$};
    % draw arrows and text between them
    \draw[->] (UL)--(UR) node [pos=0.5, above,inner sep=0.2cm] {$x^d$};
    \draw[->] (UL)--(LL) node [pos=0.4, left, outer sep=0.1cm] {$\theta$};
    \draw[->] (UR)--(LR) node [pos=0.4, right, outer sep=0.1cm] {$\theta$};
    \draw[->] (LL)--(LR) node [pos=0.5, above,inner sep=0.2cm] {$T_d$};
\end{tikzpicture}
\end{equation}
where $\theta(x) = x + 1/x$. Thus $T_d$ is the action of the endomorphism $x^d$ of the multiplicative group after quotienting by the automorphism $x \mapsto 1/x$. Note that the reason that $x^d$ descends to a well-defined map in the diagram above is that it commutes with $x \mapsto 1/x$, and hence maps any fiber $\{\alpha, 1/\alpha\}$ of $\theta$ to another fiber, namely $\{\alpha^d, 1/\alpha^d\}$.

We denote by $K(\zeta_{m^\infty} + \zeta_{m^\infty}^{-1})$ the extension of $K$ generated by adjoining all elements of $\overline{K}$ of the form $\zeta + \zeta^{-1}$, where $\zeta^{m^n} = 1$ for some $n \geq 1$. We generalize a result due to Gottesman and Tang in the case $d = 2$ \cite{richtang}: 

\begin{theorem} \label{chebmain}
Let $K$ a field, let $d \geq 2$ be an integer not dividing the characteristic of $K$, and let $b \in K$. 
Then $K(\zeta_{d^\infty} + \zeta_{d^\infty}^{-1}) \subseteq K(T_d^{-\infty}(b))$. In particular, there is a degree-2 extension of $K(T_d^{-\infty}(b))$ that contains $K(\mud)$.
\end{theorem}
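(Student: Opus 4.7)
The plan is to exploit the semi-conjugacy $T_d \circ v = v \circ x^d$ recorded in diagram \eqref{comm}, which iterates to $T_d^n(v(\gamma)) = v(\gamma^{d^n})$. Choose $\alpha \in \overline{K}$ with $\alpha + \alpha^{-1} = b$ (i.e.\ a root of $x^2 - bx + 1$); for any $n \geq 1$, any $d^n$-th root $\alpha_n$ of $\alpha$, any primitive $d^n$-th root of unity $\eta$, and any $j \in \Z$, set
$$
y_j \;=\; \eta^j \alpha_n + \eta^{-j}\alpha_n^{-1} \;=\; v(\eta^j \alpha_n).
$$
Then $T_d^n(y_j) = v(\eta^{j d^n}\alpha) = v(\alpha) = b$, so each $y_j$ lies in $K(T_d^{-\infty}(b))$.

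The key computation is then a direct expansion,
$$
y_j y_{-j} \;=\; \alpha_n^2 + \alpha_n^{-2} + \eta^{2j} + \eta^{-2j} \;=\; (y_0^2 - 2) + (\eta^{2j} + \eta^{-2j}),
$$
from which $\eta^{2j} + \eta^{-2j} = y_j y_{-j} - y_0^2 + 2 \in K(T_d^{-\infty}(b))$. As $j$ varies, $\eta^{2j}$ ranges over the cyclic subgroup generated by $\eta^2$, which has order $d^n$ when $d$ is odd and $d^n/2$ when $d$ is even; letting $n$ grow yields $\zeta + \zeta^{-1} \in K(T_d^{-\infty}(b))$ for every $\zeta \in \mud$, proving $K(\zeta_{d^\infty} + \zeta_{d^\infty}^{-1}) \subseteq K(T_d^{-\infty}(b))$.

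For the ``in particular'' statement, set $F = K(\zeta_{d^\infty} + \zeta_{d^\infty}^{-1})$ and $s_k = \zeta_{d^k} + \zeta_{d^k}^{-1} \in F$. Each $\zeta_{d^k}$ satisfies $x^2 - s_k x + 1 = 0$ over $F$, so $[F(\zeta_{d^k}) : F] \leq 2$, and I would show these quadratic extensions all coincide: using $s_k^2 - 4 = (\zeta_{d^k} - \zeta_{d^k}^{-1})^2$ and the identity
$$
(\zeta_{d^m} - \zeta_{d^m}^{-1})(\zeta_{d^n} - \zeta_{d^n}^{-1}) \;=\; \bigl(\zeta_{d^m}\zeta_{d^n} + (\zeta_{d^m}\zeta_{d^n})^{-1}\bigr) \;-\; \bigl(\zeta_{d^m}\zeta_{d^n}^{-1} + (\zeta_{d^m}\zeta_{d^n}^{-1})^{-1}\bigr) \in F,
$$
we obtain $(s_m^2-4)(s_n^2-4) \in (F^\times)^2$, hence $F(\zeta_{d^m}) = F(\zeta_{d^n})$. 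Therefore $K(\mud) = F(\zeta_{d^k})$ for any single $k$, a degree-$\leq 2$ extension of $F$; compositing with $K(T_d^{-\infty}(b))$ then gives the desired degree-$\leq 2$ extension containing $K(\mud)$.

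The main obstacle I expect is this last point — confirming that the tower $F \subseteq F(\zeta_d) \subseteq F(\zeta_{d^2}) \subseteq \cdots$ collapses into a single quadratic extension of $F$, rather than continuing to grow. The product identity for $(\zeta_{d^m} - \zeta_{d^m}^{-1})(\zeta_{d^n} - \zeta_{d^n}^{-1})$ is the linchpin, simultaneously compatibilizing all the relevant quadratic discriminants $s_k^2 - 4$ within $F$.
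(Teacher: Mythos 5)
Your proof of the main inclusion is correct and is essentially the paper's argument: both lift the problem to the power map via $v(x) = x + 1/x$ and extract $\zeta + \zeta^{-1}$ from a single fiber $T_d^{-n}(b)$ using the identity $v(x)v(y) = v(xy) + v(x/y)$. Indeed, your computation $y_j y_{-j} = (y_0^2 - 2) + (\eta^{2j} + \eta^{-2j})$ is exactly this identity with $x = \eta^j\alpha_n$ and $y = \eta^{-j}\alpha_n$. The paper applies it instead in the form $v(\zeta) = \bigl(v(\zeta\gamma) + v(\zeta^{-1}\gamma)\bigr)/v(\gamma)$, which requires $v(\gamma) \neq 0$; your additive variant avoids that division, at the harmless cost of only reaching $\eta^{2j}$ rather than $\eta^{j}$, which you handle correctly via the odd/even discussion and by letting $n$ grow. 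For the ``in particular'' clause, however, your discriminant-compatibility argument is both more elaborate than needed and incorrect in characteristic $2$ (a case the hypotheses allow, with $d$ odd): there $\zeta - \zeta^{-1} = \zeta + \zeta^{-1}$, so every $s_k^2 - 4 = s_k^2$ is a square in $F$, yet $F(\zeta_{d^k})$ can still be a nontrivial separable quadratic (Artin--Schreier-type) extension, so ``$(s_m^2-4)(s_n^2-4) \in (F^\times)^2$ implies $F(\zeta_{d^m}) = F(\zeta_{d^n})$'' fails. The tower collapse you worry about is actually automatic and needs no discriminant comparison: since $\zeta_{d^n} = \zeta_{d^{n+1}}^d$ we have $F(\zeta_{d^n}) \subseteq F(\zeta_{d^{n+1}})$, each term has degree at most $2$ over $F$ because $\zeta_{d^k}$ satisfies $x^2 - s_k x + 1$ over $F$, and an increasing chain of subfields of degree at most $2$ over $F$ is eventually constant; hence $F(\mud) = \bigcup_k F(\zeta_{d^k})$ has degree at most $2$ over $F$. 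This one-line observation is all the paper uses for its final sentence.
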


\begin{proof}
Fix an integer $n \geq 1$ and let $b' \in \overline{K} \setminus \{0\}$ satisfy $\theta(b') = b$. Let $P_{d}(x) = x^{d}$, take $\gamma \in P_{d}^{-n}(b')$, and let $\zeta$ be a primitive $d^n$-th root of unity. Note that 
\begin{equation} \label{chebfiber}
P_{d}^{-n}(b') = \{\zeta^i \gamma : 0 \leq i \leq d^n-1\}.
\end{equation}
Observe that the commutativity of \eqref{comm} gives $\theta(P_{d}^{-n}(b')) = T_{d}^{-n}(b)$.

We claim that $\theta(\zeta) \in K(T_{d}^{-n}(b))$, which proves the first assertion of the theorem.
The function $\theta$ satisfies the identity
\begin{equation} \label{identity}
\theta(x)\theta(y) = \theta(xy) + \theta(x/y).
\end{equation}
Take $x = \gamma$ and $y = \zeta$ in \eqref{identity} to obtain
\begin{equation} \label{goodquot}
\theta(\zeta) = \frac{\theta(\zeta \gamma) + \theta(\zeta^{-1} \gamma)}{\theta(\gamma)} \in K(\theta(P_{d^n}^{-1}(b'))) = K(T_{d}^{-n}(b)).
\end{equation}
To prove the last statement of the theorem, observe that $\zeta_{d^n}$ is a root of 
$$x^2 - (\zeta_{d^n} + \zeta_{d^n}^{-1})x + 1 \in K(\zeta_{d^\infty} + \zeta_{d^\infty}^{-1})[x].$$
Thus $L_n := K(T_d^{-\infty}(b))(\zeta_{d^n})$ has degree at most two over $K(T_d^{-\infty}(b))$. Note also that $L_n \subseteq L_{n+1}$ for $n \geq 1$. It follows that $L_\infty := \cup_{n \geq 1} L_n$ has degree at most two over $K(T_d^{-\infty}(b))$, as desired.
\end{proof}

\begin{remark}
In the case where $\phi = \mu \circ T_d \circ \mu^{-1}$ for some M\"obius transformation $\mu \in K'(x)$, where $K'$ is a finite extension of $K$, it follows from \eqref{conjeq} and Theorem \ref{chebmain} that $K'(\zeta_{d^\infty} + \zeta_{d^\infty}^{-1}) \subseteq K'(\phi^{-\infty}(b))$ provided that $b \neq \mu(\infty)$. In this case $K(\mud)$ is contained in a degree-2 extension of $K'(\phi^{-\infty}(b))$, and hence in a finite extension of $K(\phi^{-\infty}(b))$.
\end{remark}

\begin{remark}
One might hope to obtain the results of Theorem \ref{chebmain} in some cases where $\phi$ merely factors through a Chebyshev polynomial, i.e., $\phi = \psi(T_d(x))$ for $\psi \in K(x)$ of degree at least two meeting certain hypotheses. Let $b' \in \theta^{-1}(b)$, and set $K' = K(b')$. In the situation where there is a rational function $\ell_\psi \in K(x)$ with $\theta \circ \ell_\psi = \psi \circ \theta$, and moreover $\ell(0) = 0$ and $\ell(\infty) = \infty$, it follows from Theorem \ref{rootsof1} that $K'(\mud) \subset K'((\ell_\psi \circ P_d)^{-\infty}, b')$. Indeed, from the proof of Theorem \ref{rootsof1} we have that a primitive $d^n$-th root of unity $\zeta$ is a quotient of two products of equal numbers of elements of $(\ell_\psi \circ P_d)^{-n}(b')$. However, there does not seem to be a way to express the image of this quotient under $\theta$ as a rational function of elements of $\theta((\ell_\psi \circ P_d)^{-\infty}(b'))$, as in \eqref{goodquot}.
\end{remark}

We turn now to Latt\`es maps, i.e. those $\phi \in K(x)$ arising from a commutative diagram
\begin{equation} \label{Lattespic}
\begin{tikzpicture}[baseline=(current  bounding  box.center)]
    % set up the nodes
    \node (LL) at (0,0) {$\mathbb{P}^1$};
    \node (UR) at (3,2) {$E$};
    \node (UL) at (0,2) {$E$};
    \node (LR) at (3,0) {$\mathbb{P}^1$};
    % draw arrows and text between them
    \draw[->] (UL)--(UR) node [pos=0.5, above,inner sep=0.2cm] {$\psi$};
    \draw[->] (UL)--(LL) node [pos=0.4, left, outer sep=0.1cm] {$\pi$};
    \draw[->] (UR)--(LR) node [pos=0.4, right, outer sep=0.1cm] {$\pi$};
    \draw[->] (LL)--(LR) node [pos=0.5, above,inner sep=0.2cm] {$\phi$};
\end{tikzpicture}
\end{equation}
where $E$ is an elliptic curve, $\psi$ is a morphism, and $\pi$ is a finite separable covering. We assume that $\pi$ is defined over $K$, but allow $\psi$ a priori to be defined over a finite extension of $K$. Here we focus on \textit{flexible Latt\'es maps}, namely those for which $\psi(P) = [d]P + Q$ for some integer $d \geq 2$ and some $Q \in E(\overline{K})$, and $\deg(\pi) = 2$ with $\pi(P) = \pi(-P)$ for all $P \in E(\overline{K})$. In this case $\deg \phi = d^2$.
Although we will not need it here, one can show \cite[Proposition 6.51]{jhsdynam} that in fact $Q$ is a 2-torsion point for $E$, and that after replacing $\phi$ by a conjugate we may assume that $\pi(x,y) = x$. 

\begin{theorem} \label{Lattesroots}
Let $K$ be a field, let $d \geq 2$ be an integer not dividing the characteristic of $K$, and let $b \in \mathbb{P}^1(K)$. If $\phi \in K(x)$ is a flexible Latt\`es map of degree $d^2$, then there exists an extension $F$ of $K(\phi^{-\infty}(b))$ with $K(\mud) \subseteq F$ and $[F : K(\phi^{-\infty}(b))] \leq 8$.
\end{theorem}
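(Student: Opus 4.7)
The strategy is to pass from $\mathbb{P}^1$ to $E$ using $\pi$, exploit that iterated preimages under $\psi$ are cosets of torsion subgroups of $E$, and invoke the Weil pairing to produce roots of unity inside $K(E[d^\infty])$. Since conjugating $\phi$ by a M\"obius transformation defined over $K$ preserves the field $K(\phi^{-\infty}(b))$ by \eqref{conjeq}, I may assume $E$ is given in Weierstrass form $y^2 = x^3 + Ax + B$ with $A,B \in K$ and that $\pi(x,y) = x$. The commutativity of \eqref{Lattespic} combined with $\phi$ being defined over $K$ forces $\psi$ to be defined over $K$, and thus $Q = \psi(O) \in E(K)$ (with $Q \in E[2]$ by the discussion preceding the theorem). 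I will treat the case $b \in K$ (and $b \ne \infty$) first; the case $b = \infty$ is handled at the end.

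To carry out the plan, I would lift $b$ to a point $b' \in E(\overline{K})$ with $\pi(b') = b$. Since $y(b')^2 = b^3 + Ab + B \in K$, the extension $K(b') = K(y(b'))$ has degree at most $2$ over $K$. Set
$$
F \;=\; K(\phi^{-\infty}(b))\bigl(y(b')\bigr),
$$
which has degree at most $2$ over $K(\phi^{-\infty}(b))$. Because $Q \in E[2]$ forces $\psi$ to commute with $[-1]$, and because any morphism of $E$ commuting with $[-1]$ sends $(x,y)$ to $(u(x), y\,v(x))$ for some $u,v \in K(x)$, we have $\psi(x,y) = (F_\psi(x),\, y\,G_\psi(x))$ and, by iteration, $\psi^n(x,y) = (F_n(x),\, y\,G_n(x))$ with $F_n, G_n \in K(x)$. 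For any $P \in \psi^{-n}(b')$, the equation $\psi^n(P) = b'$ gives $y(P) = y(b')/G_n(x(P))$; since $x(P) \in \phi^{-n}(b) \subseteq F$ and $y(b') \in F$, we deduce $P \in E(F)$.

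Now fix $P_n \in \psi^{-n}(b')$; because $\psi^n(P) = [d^n]P + R_n$ for some $R_n \in E[2](K)$, the fiber satisfies $\psi^{-n}(b') = P_n + E[d^n]$. For each $T \in E[d^n]$, both $P_n$ and $P_n + T$ lie in $E(F)$, so the difference $T$ lies in $E(F)$; hence $E[d^n] \subseteq E(F)$ for every $n$, and therefore $K(E[d^\infty]) \subseteq F$. The Weil pairing $e_{d^n}: E[d^n] \times E[d^n] \to \boldsymbol{\mu}_{d^n}$ is Galois-equivariant and non-degenerate (using $\gcd(d, \mathrm{char}\,K) = 1$), which forces $K(\boldsymbol{\mu}_{d^n}) \subseteq K(E[d^n])$ for each $n$. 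Combining, $K(\mud) \subseteq K(E[d^\infty]) \subseteq F$, and $[F : K(\phi^{-\infty}(b))] \leq 2$, well within the claimed bound of $8$.

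The main technical obstacles are the degenerate cases. When $y(b') = 0$ (i.e.\ $b' \in E[2]$), the formula $y(P) = y(b')/G_n(x(P))$ becomes $0/0$; however, in this situation $\psi^{-n}(b') \subseteq E[2d^n]$, so one can recover the $x$-coordinates of $E[d^n]$ from those of $\psi^{-n}(b')$ via elementary $x$-coordinate addition formulas (at the cost of at most a further degree-$2$ extension) and then use the index-$2$ inclusion $K(x(E[d^n])) \subseteq K(E[d^n])$ to reach $K(\mud)$. The case $b = \infty$, where $b' = O$, is similar: every element of $\psi^{-n}(O)$ is torsion, and the same strategy produces a bounded extension of $K(\phi^{-\infty}(\infty))$ containing $K(\mud)$. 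The slack in the bound $8$ accommodates these adjustments.
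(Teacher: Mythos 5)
Your proposal shares the paper's core strategy --- lift the basepoint $b$ to a point $b' \in E$, observe that $\psi^{-n}(b')$ is a coset of $E[d^n]$, and use the Weil pairing to land the $d^n$-th roots of unity in $K(E[d^n])$ --- but you control the degree of the auxiliary extension differently. The paper fixes a single preimage $B_n$ together with $B_n + T_1$, $B_n + T_2$ where $T_1, T_2$ generate $E[d^n]$, notes that $K(\psi^{-n}(B))$ is generated over $K$ by these three points alone, and then bounds the degree over the field generated by the three $x$-coordinates by $2^3 = 8$ since $\deg \pi = 2$; no coordinates are needed. You instead work in Weierstrass coordinates and use $\psi \circ [-1] = [-1] \circ \psi$ to write $\psi^n(x,y) = (F_n(x), yG_n(x))$, so that adjoining the \emph{single} element $y(b')$ already makes every $P \in \psi^{-n}(b')$ rational. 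When this works it gives a sharper bound of $2$ rather than $8$, and that is a genuine improvement in the generic case.

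However, there are two gaps. First, the reduction to short Weierstrass form $y^2 = x^3 + Ax + B$ with $[-1](x,y) = (x,-y)$ needs $\mathrm{char}\,K \neq 2$ (and, as stated, $\neq 3$), while the theorem allows any characteristic prime to $d$; in characteristic $2$ the involution is $(x,y) \mapsto (x, y + a_1x + a_3)$ and the decomposition of $\psi^n$ into ``even'' and ``odd'' parts has to be reworked. Second, and more seriously, the case $y(b') = 0$ --- i.e.\ $b' \in E[2]$, which includes $b = \infty$ --- is where your argument actually breaks, and the sketch you give does not repair it. Your proposal is to recover $x(E[d^n])$ from the known $x$-coordinates of $\psi^{-n}(b')$ via the $x$-coordinate addition formulas ``at the cost of at most a further degree-$2$ extension,'' but given $x(P_n)$ and $x(P_n+T)$ those formulas only exhibit $x(T)$ and $x(2P_n + T)$ as the two roots of a quadratic over $K(x(P_n), x(P_n+T))$, and there is no canonical way to pick out $x(T)$; doing this coherently as $T$ ranges over $E[d^n]$ is not a single degree-$2$ step, and it is not clear you get a bound at all by this route. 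The paper's ``three points'' argument handles this case uniformly with no extra work, and that is the natural way to close the gap: in the degenerate case, simply adjoin $y(P_n), y(P_n + T_1), y(P_n + T_2)$ (degree $\leq 8$), as the paper does, instead of trying to reconstruct torsion $x$-coordinates by hand.
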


\begin{proof}
Let the map associated to $\phi$ in \eqref{Lattespic} be $\psi(P) = [d]P + Q$.
Fix an integer $n \geq 1$ and let $B \in E(\overline{K})$ satisfy $\pi(B) = b$. Take $B_n \in \psi^{-n}(B)$, and note that $\psi^n$ has degree $d^{2n}$ and maps $P$ to $[d^n]P$ plus a multiple of $Q$. It follows that 
\begin{equation} \label{fiber}
\psi^{-n}(B) = \{B_n + T : T \in E[d^n]\}.
\end{equation}
Therefore $E[d^n] \subset K(\psi^{-n}(B))$, and by properties of the Weil pairing \cite[Corollary 8.1.1]{AEC} we have $K(\zeta) \subset K(\psi^{-n}(B))$, where $\zeta$ is a primitive $d^n$th root of unity. Note that \eqref{fiber} also implies $K(\psi^{-n}(B)) = K(B_n, B_n + T_1, B_n + T_2)$, where $T_1$ and $T_2$ are generators of $E[d^n]$. 

Now the commutativity of \eqref{Lattespic} gives $\pi(\psi^{-n}(B)) = \phi^{-n}(b)$. 
Because $\pi$ is defined over $K$, this gives that $K(\phi^{-n}(b))$ is a subfield of $K(\psi^{-n}(B)),$ whence 
\begin{equation} \label{chain}
K(\pi(B_n), \pi(B_n + T_1), \pi(B_n + T_2)) \subseteq K(\pi(\psi^{-n}(B))) = K(\phi^{-n}(b)) \subseteq K(\psi^{-n}(B)).
\end{equation}
Because $\deg \pi = 2$, the last extension in \eqref{chain} has degree at most 8 over the first extension, and hence $[K(\psi^{-n}(B)) : K(\phi^{-n}(b))] \leq 8$.

To complete the proof, take 
$
F = K(\psi^{-\infty}(B)).% = \bigcup_{n \geq 1} K(B_n, B_n + T_1, B_n + T_2).
$
As in the first paragraph of this proof, we have $E[d^\infty] \subseteq F$, and by the existence of the Weil pairing it follows that $K(\mud) \subseteq F$. If $\gamma \in F$, then there is $n \geq 1$ with $\gamma \in K(\psi^{-n}(B)),$ and  \eqref{chain} implies that 
\begin{equation} \label{eight}
[K(\phi^{-\infty}(b))(\gamma) : K(\phi^{-\infty}(b))] \leq 8.
\end{equation}
Because $\pi$ is separable, we have that $K(\psi^{-\infty}(B))$ is a separable extension of $K(\phi^{-\infty}(b))$. The theorem now follows from \eqref{eight} and the primitive element theorem once we establish that 
$[K(\psi^{-\infty}(B)) : K(\phi^{-\infty}(b))]$ is finite. To see this, we may replace $K$ by a finite extension if necessary and assume that $Q$ is defined over $K$. Then $K(\psi^{-n}(B)) \subseteq K(\psi^{-(n+1)}(B))$ for all $n \geq 1$, and because each of these extensions has degree at most 8 over $K(\phi^{-\infty}(b))$, it follows that their union, which is $K(\psi^{-\infty}(B))$, does as well. 
\end{proof}

In some circumstances the bound of 8 in Theorem \ref{Lattesroots} can be improved. If $b = \infty$, then after replacing $\phi$ by a conjugate, we may assume that  $K(\phi^{-n}(b)) = K(x(E[d^n]))$, the extension of $K$ generated by the $x$-coordinates of the $d^n$-torsion points of $E$. The group law on $E$ then implies that $[K(E[d^n]) : K(x(E[d^n]))] \leq 2$ (see e.g. \cite[Lemma 2.2]{bandini}). Thus the bound is 2 in this case.

\section{Results on higher ramification groups} \label{preliminaries}

In this section we deduce Theorem \ref{mainram1} from Theorem \ref{mainroots}. We first give some preliminary results on higher ramification groups. 

\begin{lemma} \label{tower}
Let $K$ be a finite extension of $\Q_p$, and $L$ a Galois extension of $K$. Then $\Gal(L/K)^u$ is infinite for all $u \in \mathbb{R}^+$ if and only if there is a sequence $(F_n)_{n \geq 0}$ of finite Galois sub-extensions of $L$ with 
$$K := F_0 \subset F_1 \subset F_2 \subset \cdots$$
such that for each $u \in \R^+$ and $B > 0$ there exists $n \geq 0$ with $|\Gal(F_n/K)^u| > B$.
\end{lemma}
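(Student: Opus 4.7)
The central tool will be Herbrand's theorem on the compatibility of the upper numbering filtration with quotients: for a Galois extension $L/K$ (possibly infinite) and a Galois sub-extension $F$, the natural restriction map $\Gal(L/K) \to \Gal(F/K)$ sends $\Gal(L/K)^u$ \emph{onto} $\Gal(F/K)^u$ for all $u \geq 0$ (Serre, \emph{Local Fields}, Ch.~IV). For infinite $L/K$, the definition of $\Gal(L/K)^u$ is precisely arranged so that $\Gal(L/K)^u = \varprojlim_F \Gal(F/K)^u$, where $F$ ranges over finite Galois sub-extensions. I will also use two simple monotonicities: $\Gal(L/K)^u \supseteq \Gal(L/K)^{u'}$ whenever $u \leq u'$, and if $F \subseteq F'$ are both finite Galois over $K$, then restriction gives $|\Gal(F'/K)^u| \geq |\Gal(F/K)^u|$.

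For the easy direction ($\Leftarrow$), suppose a sequence $(F_n)$ with the stated property exists. Fix $u \in \mathbb{R}^+$. By Herbrand, $\Gal(L/K)^u$ surjects onto $\Gal(F_n/K)^u$ for every $n$. Since $|\Gal(F_n/K)^u|$ is unbounded in $n$, the group $\Gal(L/K)^u$ cannot be finite, as a finite group has images of bounded order.

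For the forward direction ($\Rightarrow$), the plan is to reduce the uncountable condition on $u$ to a countable one by monotonicity, then build a single increasing sequence via a compositum trick. Concretely, assume $\Gal(L/K)^u$ is infinite for all $u \in \mathbb{R}^+$. For each positive integer $n$, I will show that there exists a finite Galois sub-extension $E_n \subseteq L$ with $|\Gal(E_n/K)^n| > n$. Then I define $F_0 = K$ and $F_n = E_1 E_2 \cdots E_n$, which is a finite Galois sub-extension of $L$ containing each $E_i$, so $F_0 \subset F_1 \subset F_2 \subset \cdots$. Given any $u \in \mathbb{R}^+$ and $B > 0$, choose an integer $n \geq \max(u, B)$; then
\[
|\Gal(F_n/K)^u| \;\geq\; |\Gal(E_n/K)^u| \;\geq\; |\Gal(E_n/K)^n| \;>\; n \;\geq\; B,
\]
where the first inequality uses surjectivity of restriction on upper-numbering filtrations for $E_n \subseteq F_n$, and the second uses monotonicity in $u$ (since $u \leq n$).

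The one non-formal step, which is the main obstacle to address, is the existence of $E_n$ with $|\Gal(E_n/K)^n| > n$. For this I will invoke the following general fact about inverse limits of finite groups with \emph{surjective} transition maps: if $G = \varprojlim_i G_i$ is such a limit and $\sup_i |G_i| < \infty$, then the transition maps are eventually isomorphisms and $G$ is finite. Applying this to $\Gal(L/K)^n = \varprojlim_F \Gal(F/K)^n$, whose transition maps are surjective by Herbrand, the infiniteness of $\Gal(L/K)^n$ forces $\sup_F |\Gal(F/K)^n| = \infty$, so some $E_n$ with the required bound exists. This completes the construction.
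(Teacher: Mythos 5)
Your proof is correct and rests on exactly the two ingredients the paper's one-sentence proof cites: Herbrand's theorem (surjectivity of $\Gal(L/K)^u \to \Gal(F/K)^u$) and the inverse-limit definition of the upper numbering for infinite extensions. You have simply written out in full the details (the compositum construction of the $F_n$, the reduction to integer values of $u$ by monotonicity, and the boundedness criterion for inverse limits of finite groups) that the paper leaves implicit.
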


\begin{proof}
This follows from the definition of the upper numbering for infinite extensions and Herbrand's theorem on ramification (see e.g. \cite[Proposition IV.14]{serrebook}) which gives that $G^u$ is infinite if $(G/H)^u$ is infinite.
\end{proof}

\begin{lemma} \label{deepramground}
Let $K$ be a finite extension of $\Q_p$, $L$ an arbitrary Galois extension of $K$, and $K'$ a finite Galois extension of $K$. Then the following are equivalent:
\begin{enumerate}
    \item $\Gal(L/K)^u$ is infinite for all $u \in \mathbb{R}^+$.
    \item $\Gal(LK'/K)^u$ is infinite for all $u \in \mathbb{R}^+$;
    \item $\Gal(LK'/K')^u$ is infinite for all $u \in \mathbb{R}^+$;
\end{enumerate}
\end{lemma}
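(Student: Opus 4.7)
The plan is to establish $(1)\iff(2)$ using the quotient form of Herbrand's theorem and $(2)\iff(3)$ using the subgroup form, both exploiting that $K'/K$ is finite.

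First, I would set $M = LK'$, $G = \Gal(M/K)$, $H = \Gal(M/K')$, and $N = \Gal(M/L)$. Since $L/K$ and $K'/K$ are both Galois, so is $M/K$, and both $H$ and $N$ are closed normal subgroups of $G$. Two key finiteness observations follow: $N$ injects into $\Gal(K'/K)$ via restriction, so $|N|\leq [K':K]<\infty$; and $H$ has finite index $[K':K]$ in $G$.

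For $(1)\iff(2)$, I would apply Herbrand's theorem for quotients (\cite{serrebook}, Ch.~IV) to get, for every $u > 0$,
\[
\Gal(L/K)^u = (G/N)^u = G^u N / N.
\]
Since $|N|$ is finite, $|G^u|$ and $|G^u N/N|$ differ by at most the factor $|N|$, so $G^u$ is infinite if and only if $\Gal(L/K)^u$ is infinite.

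For $(2)\iff(3)$, the crucial input is the subgroup--Herbrand identity
\[
H^v = H \cap G^{\phi_{K'/K}(v)} \qquad \text{for all } v \geq 0,
\]
where $\phi_{K'/K}\colon \R_{\geq 0}\to \R_{\geq 0}$ is the Herbrand function of the finite extension $K'/K$ -- a continuous, strictly increasing bijection fixing $0$. Granted this, $H\cap G^u$ has index at most $[K':K]$ in $G^u$ for every $u > 0$, so infiniteness transfers in both directions: if $(2)$ holds, then for each $v > 0$ the group $H^v$ is a finite-index subgroup of the infinite group $G^{\phi_{K'/K}(v)}$, hence is infinite; conversely, if $(3)$ holds, then for each $u > 0$ the value $v := \psi_{K'/K}(u)$ is positive, and $G^u \supseteq G^u\cap H = H^v$ is infinite.

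The main obstacle is justifying the subgroup--Herbrand identity when $M/K$ is infinite. I would reduce to finite Galois subextensions $E/K$ of $M/K$ that contain $K'$ (these are cofinal among all finite Galois subextensions of $M/K$, since $K'/K$ is itself a finite Galois subextension). For each such $E$, the tower formula $\phi_{E/K} = \phi_{K'/K}\circ \phi_{E/K'}$ combined with the restriction property of lower numbering gives
\[
\Gal(E/K')^v = \Gal(E/K')\cap \Gal(E/K)^{\phi_{K'/K}(v)}.
\]
Crucially, the function $\phi_{K'/K}$ on the right depends only on $K'/K$, not on $E$. Taking the inverse limit over $E$ (which commutes with intersections of closed subgroups of a profinite group) then recovers $H^v = H \cap G^{\phi_{K'/K}(v)}$, completing the argument.
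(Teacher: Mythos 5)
Your proof is correct, and while the $(1)\Leftrightarrow(2)$ half is essentially identical to the paper's (Herbrand's quotient theorem plus the finiteness of $\Gal(LK'/L)$), your treatment of $(2)\Leftrightarrow(3)$ takes a genuinely different route. The paper handles the two directions asymmetrically: for $(2)\Rightarrow(3)$ it invokes the one-sided containment $G^u \cap P \leq P^u$ from \cite[Lemma 5.5]{Poonen2} together with the finiteness of $G/P$, and for $(3)\Rightarrow(2)$ it descends to finite Galois subextensions $F_n$ and compares the two Hasse--Herbrand transition functions directly, showing $\phi_{F_n/K}(x) \leq \phi_{F_n/K'}(x)$ via the inequality $[X(n):X(n)_v] \geq [Y(n):Y(n)_v]$ and then feeding the resulting cardinality bound into the paper's Lemma \ref{tower}. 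You instead prove the exact compatibility $H^v = H \cap G^{\phi_{K'/K}(v)}$ of the upper numbering with the open subgroup $H = \Gal(LK'/K')$, correctly deriving it at finite levels from $H_w = H\cap G_w$ and the tower formula $\phi_{M/K} = \phi_{K'/K}\circ\phi_{M/K'}$, and passing to the limit over the cofinal family of finite Galois subextensions containing $K'$. This identity yields both directions at once from the single observation that $H\cap G^u$ has index at most $[K':K]$ in $G^u$, and that $\phi_{K'/K}$ is a self-homeomorphism of $\R_{\geq 0}$ fixing $0$. What your approach buys is a cleaner, symmetric argument that makes the dependence on the fixed finite extension $K'/K$ explicit (only $\phi_{K'/K}$ enters, not the varying $F_n$); what the paper's approach buys is that it avoids proving the subgroup--Herbrand identity for infinite extensions, needing only the coarser inequality between transition functions and a cited containment. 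Both are sound.
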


\begin{proof}
We have the following diagram:
$$ \begin{tikzpicture}

    \node (K) at (0,0) {$K$};
    \node (K') at (2,1.5) {$K'$};
    \node (LK') at (0,3) {$LK'$};
    \node (L) at (-2,1.5) {$L$};

    \draw (K)--(K'); 
    \draw (K)--(L) node [pos=0.7, below,inner sep=0.4cm] {$R$};
    \draw (LK')--(L) node [pos=0.7, above,inner sep=0.4cm] {$H$};
    \draw (LK')--(K') node [pos=0.7, above, inner sep=0.4cm] {$P$};
    \draw (K)--(LK') node [pos=0.5, right, inner sep=0.2cm] {$G$};

    \end{tikzpicture}
    $$
Here $G = \Gal(LK'/K)$, $H = \Gal(LK'/L)$, $P = \Gal(LK'/K')$, and $R = \Gal(L/K)$.  
Note that $P \triangleleft G$, $H \triangleleft G$, and $G/H \simeq R$.  Recall that by Herbrand's theorem, $(G^uH)/H \simeq (G/H)^u$. If $G^u$ is infinite for all $u$, then since $|H| = [K' : (L \cap K')] \leq [K':K] < \infty$, we have that $(G/H)^u \simeq R^u$ is infinite for all $u$, so $(2) \Rightarrow (1)$.  Similarly, (1) $\Rightarrow$ (2) directly from Herbrand's theorem.

Next, assume $G^u$ is infinite for all $n$.  Since $[K':K]$ is finite, so is $G/P$ and therefore for all $u$ so is $G^u/(G^u \cap P)$.  It follows that $G^u \cap P$ is infinite for all $u$.  From \cite[Lemma 5.5]{Poonen2}, we have $G^u \cap P \leq P^u$, so $P^u$ is infinite for all $u$.  Therefore (2) $\Rightarrow$ (3).

Finally, assume $P^u$ is infinite for all $u$.  
Let $B > 0$ be fixed. By Lemma \ref{tower} there is a sequence ${F_n}$ of finite Galois sub-extensions of $LK'/K'$ such that for each $u \in \Z^+$, there exists $m$ such that $|\Gal(F_m/K')^{u}|>B$.
Let $X(n)=\Gal(F_n/K)$, and let $Y(n) = \Gal(F_n/K')$; we then have, for all $v \in \Z^+$,
\begin{align*}
    [X(n):X(n)_v] &= [X(n):Y(n)X(n)_v][Y(n)X(n)_v:X(n)_v] \\
    & \geq [Y(n)X(n)_v:X(n)_v] = [Y(n):Y(n)_v]
\end{align*}

If $\phi$ is the Herbrand transition function for the specified extension, we then have
\begin{equation} \label{transition}
\phi_{F_n/K}(x) = \int^x_0 \frac{dt}{[X(n)_0:X(n)_t]} 
\leq  \int^x_0  \frac{dt}{[Y(n)_0:Y(n)_t]} = \phi_{F_n/K'}(x).
\end{equation}

Therefore, if $v = \phi^{-1}_{F_n/K'}(u)$, we have \begin{align*}
    |X(n)^u| & = |X(n)^{\phi_{F_n/K'}(v)}| = |X(n)_v| \\
     & \geq |Y(n)_v| = |Y(n)^{\phi_{F_n/K}(v)}|\\
     & \geq |Y(n)^{\phi_{F_n/K'}(v)}| = |Y(n)^u|
\end{align*}

So if $|Y(m)^u| > B$, we also have $|X(m)^u| > B$;
 hence by Lemma \ref{tower} we have $G^u$ infinite for all $u$, as desired.
\end{proof}

\begin{lemma}
\label{unitylem}
Let $K$ be a finite extension of $\Q_p$ and $L$ a Galois extension of $K$. Suppose that $E$ is a finite extension of $L$ with $K(\mum) \subseteq E$ for some multiple $m$ of $p$. Then $\Gal(L/K)^u$ is infinite for all $u \in \mathbb{R}^+$. 
\end{lemma}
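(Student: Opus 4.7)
The strategy is to combine the hypothesis $K(\mum) \subseteq E$ with the classical fact that the $p$-adic cyclotomic extension $K(\mup)/K$ has $\Gal(K(\mup)/K)^u$ infinite for every $u \in \R^+$, and then propagate this property down to $L/K$ via two applications of Herbrand's theorem. The classical fact itself can be obtained from the explicit description of the ramification filtration of $\Q_p(\zeta_{p^n})/\Q_p$ (see e.g. Serre, \emph{Local Fields}, Ch.~IV), or equivalently from the fact that $K(\mup)/K$ is APF, so that each $\Gal(K(\mup)/K)^u$ is an open subgroup of the infinite group $\Gal(K(\mup)/K)$.

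The first step is the set-theoretic observation that $p \mid m$ implies $\mup \subseteq \mum$, so $K(\mup) \subseteq K(\mum) \subseteq E$. Let $L' := L \cdot K(\mup)$. Because both $L/K$ and $K(\mup)/K$ are Galois, so is $L'/K$, and since $L' \subseteq E$ we get $[L' : L] \leq [E : L] < \infty$. Write $G = \Gal(L'/K)$, $N = \Gal(L'/L)$ (finite and normal in $G$), and $M = \Gal(L'/K(\mup))$ (possibly infinite).

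Now apply Herbrand's theorem twice. First, for the projection $G \twoheadrightarrow G/M \cong \Gal(K(\mup)/K)$, Herbrand gives
$\Gal(K(\mup)/K)^u = G^u M / M \cong G^u/(G^u \cap M).$
Since the left side is infinite for every $u \in \R^+$ by the classical fact, $G^u$ itself must be infinite (a finite group cannot have an infinite quotient). Second, for the projection $G \twoheadrightarrow G/N \cong \Gal(L/K)$, Herbrand gives
$\Gal(L/K)^u = G^u N / N.$
Since $N$ is finite and $G^u$ is infinite, this quotient is infinite, which is exactly what we wanted.

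The main point of care is the classical fact about $\Gal(K(\mup)/K)^u$; everything else is formal manipulation of the ramification filtration under quotients and under extending the base by a Galois subextension. Note that we cannot invoke Lemma \ref{deepramground} directly because $K(\mup)/K$ is not a \emph{finite} Galois extension, so the construction of the intermediate field $L' = L \cdot K(\mup)$ plays the role that $K'$ played in the finite setting, at the cost of using Herbrand twice by hand rather than as a black box.
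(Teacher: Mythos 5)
Your proof is correct. It uses the same two ingredients as the paper's argument --- the classical fact that $\Gal(K(\mup)/K)^u$ is infinite for all $u \in \R^+$ (the paper cites \cite[Proposition IV.18]{serrebook}) and the compatibility of the upper-numbering filtration with quotients (Herbrand) --- but organizes them differently. The paper first replaces $E$ by a Galois closure, deduces via Herbrand that $\Gal(E/K)^u$ is infinite, then writes $E = L(\alpha)$ by the primitive element theorem, forms the Galois closure $K'$ of $K(\alpha)/K$, and invokes Lemma \ref{deepramground} for the pair $(L,K')$ to descend to $\Gal(L/K)^u$. You instead work inside the compositum $L' = L\cdot K(\mup)$, which is automatically Galois over $K$ and which the hypothesis $K(\mup)\subseteq K(\mum)\subseteq E$ forces to be a \emph{finite} extension of $L$; this replaces the primitive element theorem, the auxiliary field $K'$, and the appeal to Lemma \ref{deepramground} by one direct application of the ``quotient by a finite closed normal subgroup'' case of Herbrand (which is precisely the easy implication $(2)\Rightarrow(1)$ inside that lemma). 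Both steps of your argument are sound: $G^uM/M \cong G^u/(G^u\cap M)$ infinite forces $G^u$ infinite, and $G^uN/N$ with $N$ finite and $G^u$ infinite is infinite. The only nontrivial shared input is the statement about $K(\mup)/K$ for a general finite extension $K$ of $\Q_p$, and your suggested justifications (explicit filtration of the cyclotomic tower, or its APF-ness) are standard and adequate. Net effect: same conclusion with fewer moving parts.
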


\begin{proof}
From the primitive element theorem we have $E = L(\alpha)$ for some $\alpha \in \overline{K}$. Let $K'$ be the Galois closure of $K(\alpha)$ over $K$, which is a finite extension of $K$.  Then $LK'$ is Galois over $K$ (as a compositum of a Galois extension with a finite Galois extension), and is a finite extension of $E$ since $\Gal(LK'/L) \simeq \Gal(K'/K)$.  We can then assume $E$ is Galois over $K$ and $L$ by replacing $E$ with $LK'$ if necessary.

From \cite[Proposition IV.18]{serrebook} we have that $\Gal(K(\mup)/K)^u$ is infinite for all $u \in \mathbb{R}^+$. Because $p \mid m$, we have $K(\mup) \subseteq E$, and Herbrand's theorem immediately implies that $\Gal(E/K)^u$ is infinite for all $u \in \R^+$.  An application of Lemma \ref{deepramground} then completes the proof. 
\end{proof}

\begin{proof}[Proof of Theorem \ref{mainram1}]
The theorem follows immediately from Theorem \ref{mainroots} and Lemma \ref{unitylem}.
\end{proof}

\section{Some iterated extensions that are branch-APF} \label{apf}

\label{apfdisc}
Recall from the introduction that if $K$ is a finite extension of $\Q_p$, an infinite, algebraic, totally wildly ramified extension $L$ of $K$ is APF if 
$[G_K : G_K^uG_L] < \infty$ for all $u \in \mathbb{R}^+$, where $G_K$ denotes the absolute Galois group $\Gal(\overline{K}/K)$ and similarly for $G_L$. In this section we use the main result of \cite{CDL}, which characterizes arithmetically profinite extensions, to prove Theorem \ref{mainapfintro}, which shows that $K(\phi^{-\infty}(b))$ is branch-APF for certain $\phi$ and $b$. In particular, this shows that $\Gal(K(\phi^{-\infty}(b)))^u$ is infinite for all $u \in \mathbb{R}^+$. Indeed,
let $E$ be a Galois extension of $K$ containing an APF extension $L/K$. Because $L/K$ is infinite,  $[G_K^uG_L : G_L]$ must be infinite as well. However, one verifies that
$$
[G_K^uG_L : G_L] \leq [G_K^uG_E : G_E],
$$
and by Herbrand's theorem on ramification, 
the latter is $(G_K/G_E)^u$, i.e. $\Gal(E/K)^u$.

 A motivating application of \cite{CDL} is dynamical, though the basepoint is required to be a uniformizer. Sing \cite{sing1}, by skillfully employing the concrete description of the Hasse-Herbrand function given in \cite{lubin}, furnishes a highly detailed description of the higher ramification filtration arising from certain post-critically bounded polynomials, independent of basepoint. In our Theorem \ref{mainapfintro} we content ourselves to prove branch-APFness, for a broader class of $\phi$ than in either \cite{CDL} or \cite{sing1}, but with some restrictions on the basepoint (though in general for more basepoints than just uniformizers, as in the examples given in \cite{CDL}).

Throughout this section, we assume that a finite extension $K$ of $\Q_p$ has maximal ideal $\p$, ring of integers $\O_K$, and valuation $v_K$ normalized so that $v_K(\O_K) = \Z$.
For our purposes, we require only one direction of \cite[Theorem 1.1]{CDL}, which we state for convenience:
\begin{theorem}[\cite{CDL}] \label{maintool}
Let $E$ be a finite extension of $\Q_p$, and let $L$ be an infinite, totally ramified extension of $E$. Then $L/E$ is (strictly\footnote{For a definition of {\em strictly} APF, see Section 1.4.1 of \cite{Wintenberger} or Remark 2.8 of \cite{CDL}. In our constructions, all APF extensions are strictly APF.}) APF provided that there exists a tower $\{E_n\}_{n \geq 2}$ of finite extensions of $E_1:=E$ inside $L$ with $L = \cup E_n$ and a norm-compatible sequence $\{\pi_n\}_{n \geq 2}$ with $\pi_n$ a uniformizer of $E_n$ such that:
\begin{enumerate}
    \item The degrees $q_n := [E_{n+1} : E_n]$ are bounded above.
    \item If $g_n(x) = x^{q_n} + a_{n,q_{n-1}}x^{q_n-1} + \dots + a_{n,1}x + (-1)^p\pi_n \in E_n[x]$ is
the minimal polynomial of $\pi_{n+1}$ over $E_n$, then the non-constant and non-leading coefficients $a_{n,i}$ of $g_n$ satisfy $v_E(a_{n,i}) > \epsilon$ for some $\epsilon > 0$, independent of $n$ and $i$.
\end{enumerate}
\end{theorem}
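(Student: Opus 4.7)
The plan is to verify APF-ness by a direct analysis of the Hasse--Herbrand transition functions of the tower $\{E_n\}$, converting the minimal-polynomial hypothesis into ramification data one level at a time. APF-ness of $L/E$ is equivalent to the finiteness of $\psi_{L/E}(u) := \lim_n \psi_{E_n/E}(u)$ for every $u \in \mathbb{R}^+$, and the chain rule $\psi_{E_{n+1}/E}(u) = \psi_{E_{n+1}/E_n}\bigl(\psi_{E_n/E}(u)\bigr)$ reduces this to uniform control on each one-step factor $\psi_{E_{n+1}/E_n}$. The payoff of the hypotheses will be that the largest ramification break $b_n$ of each step, measured in the lower numbering of $E_n$, grows without bound and eventually overtakes $\psi_{E_n/E}(u)$, forcing the sequence to stabilize.

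The first step is to upgrade the coefficient hypothesis to a strong Eisenstein condition at each level. Setting $e_n = [E_n:E]$ and recalling $v_{E_n} = e_n \cdot v_E$, the fact that $L/E$ is infinite and totally ramified gives $e_n \to \infty$, so $v_{E_n}(a_{n,i}) > \epsilon e_n \to \infty$ while the constant term has $v_{E_n}$-valuation exactly $1$. Hence for large $n$ each $g_n$ is Eisenstein over $E_n$, each step $E_{n+1}/E_n$ is totally ramified of degree $q_n \le Q$, and $\pi_{n+1}$ is a uniformizer.

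The heart of the argument is to extract a lower bound on the lower-numbered ramification breaks of each step from the different exponent $d_n := v_{E_{n+1}}\bigl(g_n'(\pi_{n+1})\bigr)$ via Hilbert's formula $d_n = \sum_{i \ge 0}(|G_i(E_{n+1}/E_n)| - 1)$. Writing
$$
g_n'(\pi_{n+1}) = q_n\,\pi_{n+1}^{q_n - 1} + \sum_{i=1}^{q_n - 1} i\, a_{n,i}\, \pi_{n+1}^{i-1},
$$
each middle term has $v_{E_{n+1}}$-valuation at least $q_n \epsilon e_n + (i-1)$, growing linearly in $e_n$, while the leading term has valuation $q_n e_n v_E(q_n) + q_n - 1$, which also grows linearly in $e_n$ in the wild case. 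Thus $d_n \gtrsim c\, e_n$ for a constant $c > 0$ depending only on $\epsilon$, $Q$, and $v_E(p)$; combining with the uniform bound $|G_i| \le q_n \le Q$ then forces the largest lower break $b_n$ of $E_{n+1}/E_n$ to satisfy $b_n \ge d_n/(Q-1) - 1 \gtrsim c'\, e_n$.

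Finally, I would iterate. Since $\psi_{E_{n+1}/E_n}$ is the identity below $b_n$ and has total slope at most $Q$ above it, and since $b_n \gtrsim c' e_n$ with $e_n = \prod_{k<n} q_k \ge 2^{n-1}$, the monotone sequence $\psi_{E_n/E}(u)$ is eventually trapped below the rapidly growing $b_n$ and hence is ultimately constant, proving $\psi_{L/E}(u) < \infty$ and so APF. The \emph{main obstacle} is pinning down $d_n$ uniformly in the wild case $p \mid q_n$: there $v_{E_n}(q_n) \ge e_n v_E(p) > 0$, the leading term of $g_n'(\pi_{n+1})$ does not obviously dominate the middle terms, and a careful Newton-polygon analysis of $g_n'$ is required; moreover, a single wild step can have several distinct lower-numbered ramification breaks, and one must extract a lower bound for the \emph{largest} break (not merely for the sum $d_n$) using the hypothesis on all coefficients $a_{n,i}$ simultaneously. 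The uniform positivity of $\epsilon$ is also what underwrites \emph{strict} APF-ness rather than just APF-ness, as it produces a positive lower bound on the normalized per-step break that persists in the limit.
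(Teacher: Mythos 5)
This statement is not proved in the paper at all: it is quoted verbatim from \cite{CDL} (Theorem 1.1 there) and used as a black box, so your proposal has to be measured against the proof in that reference, which runs through Lubin's Newton-copolygon computation of the one-step Hasse--Herbrand functions \cite{lubin}. Your global strategy --- reduce APF-ness to finiteness of $\lim_n \psi_{E_n/E}(u)$, use the chain rule for $\psi$, and show the one-step transition functions $\psi_{E_{n+1}/E_n}$ are eventually the identity at the relevant points --- is the right one and matches \cite{CDL} in outline. The Eisenstein observation and the lower bound $d_n = v_{E_{n+1}}(g_n'(\pi_{n+1})) \gtrsim e_n$ are also fine; note that for a \emph{lower} bound on the valuation of a sum you only need every term to have large valuation, so the ``leading term does not dominate'' worry you flag in the wild case is not actually an obstacle.

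The genuine gap is in converting the different exponent $d_n$ into control of $\psi_{E_{n+1}/E_n}$. What makes $v_{n+1} := \psi_{E_{n+1}/E_n}(v_n)$ equal to $v_n$ is that $v_n$ lies below the \emph{smallest} positive ramification break of $E_{n+1}/E_n$: the function $\psi_{E_{n+1}/E_n}$ is the identity only up to the first break, not up to the largest break $b_n$ as your closing paragraph asserts. Hilbert's formula $d_n = \sum_i(|G_i|-1)$ bounds the largest break from below but is perfectly consistent with the smallest break equalling $1$ for every $n$; in that scenario $\psi_{E_{n+1}/E_n}(v)-v$ is positive and proportional to $v-1$ on a long interval, the iterates $v_n$ grow geometrically, and the argument does not close. (You correctly identify multiple breaks as the main obstacle but aim at the wrong one: it is the smallest break, not the largest, that must be driven to infinity.) The repair --- and what \cite{CDL} actually does --- is to control the entire Newton polygon of $g_n(\pi_{n+1}+y)$, i.e.\ to bound $v_{E_{n+1}}\bigl(g_n^{(j)}(\pi_{n+1})/j!\bigr)$ from below for \emph{every} $j$, not just $j=1$; since $q_n \le Q$ is bounded, hypothesis (2) forces all of these to be $\gtrsim \epsilon e_n$, hence every slope of that polygon and therefore every break, including the smallest, is $\gtrsim \epsilon e_n/Q$, after which your concluding iteration goes through. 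Two smaller issues: for non-Galois steps Hilbert's formula in the form you use requires passing to the Galois closure (or arguing directly with the Newton polygon); and your argument tacitly needs $p \mid q_n$ for all large $n$, since infinitely many tame steps give $\psi_{E_{n+1}/E_n}(v)=q_n v$ and destroy convergence --- in the paper's application this is automatic because $q_n = p^r$ for all $n$.
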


In our application, the coefficients $a_{n,i}$ are independent of $n$.

The authors of \cite{CDL} note that their theorem is perhaps better suited to constructing APF extensions that to verifying that a given extension is APF. A particular point of difficulty comes in finding the norm-compatible sequence $\pi_n$. To realize our desired application, we locate this sequence in terms of iterated preimages, and this constrains the basepoints to which our theorem applies.  

Before proving Theorem \ref{mainapfintro}, note that for $\phi \in K(x)$, we may assume that all coefficients lie in $\O_K$. We denote the  coefficient-wise reduction modulo $\p$ of $\phi$ as $\overline{\phi}(x) \in (\O_K / \p)(x)$. Recall that $\phi$ has good reduction if $\deg \phi = \deg \overline{\phi}$. For convenience we restate Theorem \ref{mainapfintro}: 

\begin{theorem} \label{mainapf}
Let $K$ be a finite extension of $\Q_p$. Suppose that $\phi \in K(x)$ has good reduction and that there exists $m \geq 1$ with 
\begin{equation} \label{redhyp}
\overline{\phi^m}(x) = cx^{p^r} 
\end{equation}
for some $c \neq 0$ and $r \geq 1$. Then $\phi^m$ has a unique fixed point $\gamma$ (resp. $\delta$) with positive (resp. negative) $p$-adic valuation. 
Additionally, there is a unit $w \in \overline{K}$ such that $K(\phi^{-\infty}(b))$ is branch-APF over $K(\gamma, \delta, w)$, where 
$$
b = \frac{\pi + w\gamma}{\delta^{-1}\pi + w}
$$
for any uniformizer $\pi$ of $K(\gamma, \delta, w)$.
\end{theorem}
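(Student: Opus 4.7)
The plan is to reduce to Theorem~\ref{maintool} by conjugating $\phi^m$ via a M\"obius transformation $\mu$ so that the resulting map $\psi$ fixes $0$ and $\infty$, then constructing a tower of Eisenstein extensions from iterated $\psi$-preimages of a uniformizer $\pi$ of $K' := K(\gamma,\delta,u)$. The basepoint $b$ in the statement is exactly $\mu(\pi)$ for $\mu(x) = (x + u\gamma)/(\delta^{-1}x + u)$, so taking $\phi^m$-preimages of $b$ corresponds to taking $\psi$-preimages of $\pi$, and a branch for $\psi$ lifts to one for $\phi$ after inserting intermediate $\phi$-preimages.

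First, locate $\gamma$ and $\delta$ via Hensel's lemma applied to $\phi^m(x) - x$: the reduction $cx^{p^r} - x = x(cx^{p^r-1} - 1)$ has a simple root at $0$, lifting to $\gamma$, and analogously $\infty$ is a simple fixed point in reduction, lifting to $\delta$. Set $\mu$ as above, so $\mu(0) = \gamma$, $\mu(\infty) = \delta$, and $\overline{\mu}(x) = x/\overline{u}$ because $\overline{\gamma} = \overline{\delta^{-1}} = 0$. Then $\psi := \mu^{-1} \circ \phi^m \circ \mu$ fixes $0$ and $\infty$ and has good reduction $\overline{\psi}(x) = \overline{u}^{\,1-p^r}\,c\,x^{p^r}$.

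Writing $\psi = N/D$ in lowest terms with $N, D \in \O_{K'}[x]$ appropriately scaled, the good reduction and boundary conditions force $N(x) = c_N x^{p^r} + \pi x\, \widetilde{N}(x)$ and $D(x) = d_0 + \pi x\, \widetilde{D}(x)$ with $c_N, d_0 \in \O_{K'}^\times$ and $\widetilde{N}, \widetilde{D} \in \O_{K'}[x]$. For $y$ in the maximal ideal of $K'$, the monic polynomial $f_y(x) := c_N^{-1}\bigl(N(x) - y D(x)\bigr)$ has constant term $-(d_0/c_N)\, y$ and middle coefficients of $v_{K'}$-valuation at least $1$. Setting $\alpha_0 := \pi$ and inductively choosing $\alpha_{n+1}$ to be a root of $f_{\alpha_n}$, the Eisenstein criterion shows $E_n := K'(\alpha_n)$ is a tower of totally ramified extensions with $[E_{n+1}:E_n] = p^r$ and $\alpha_n$ a uniformizer of $E_n$. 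A direct computation gives $N_{E_{n+1}/E_n}(\alpha_{n+1}) = (-1)^{p^r+1}(d_0/c_N)\,\alpha_n$; the conjugation-by-scaling identity $\psi_u(x) = u\,\psi_1(x/u)$ (where $\psi_t$ corresponds to the parameter value $t$) shows that $d_0/c_N$ is proportional to $u^{p^r-1}$, and solving $u^{p^r-1} = (-1)^{p+1}\, (d_0/c_N)^{-1}\big|_{u=1}$ in $\overline{K}$ produces the desired unit $u$ making the sequence strictly norm-compatible. The uniform bound $v_{K'}(a_{n,i}) \geq 1$ on middle coefficients then verifies hypothesis (2) of Theorem~\ref{maintool} with $\epsilon = 1/2$, so $\bigcup_n E_n$ is APF over $K'$.

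To transfer branch-APFness from $\psi$ to $\phi$, set $\beta_{mn} := \mu(\alpha_n)$ and define intermediate $\phi$-preimages $\beta_{mn - j} := \phi^j(\beta_{mn})$ for $0 \leq j < m$. Then $\{\beta_k\}_{k \geq 0}$ satisfies $\phi(\beta_k) = \beta_{k-1}$ with $\beta_0 = \mu(\pi) = b$, and since $\alpha_n = \mu^{-1}(\beta_{mn})$ we have $K'(\beta_1, \beta_2, \ldots) = \bigcup_n K'(\alpha_n)$, completing the proof. The main technical obstacle is the determination of $u$: the middle-coefficient bound follows cleanly from good reduction, but pinning down $u$ to achieve \emph{strict} norm-compatibility (as opposed to ``compatibility up to a fixed unit'') requires the scaling analysis above and the solution of a tame Kummer equation in $\overline{K}$. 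The case $u = 1$ highlighted in the theorem is precisely when this Kummer equation is already satisfied, which happens when $\delta = \infty$ and the leading coefficient of the numerator of $\phi^m$ equals the denominator of $\phi^m$ evaluated at $\gamma$.
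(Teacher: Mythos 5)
Your proposal is correct and follows essentially the same route as the paper: conjugate $\phi^m$ by a M\"obius map so that $0$ and $\infty$ are fixed and the reduction is a monomial, build a norm-compatible tower of uniformizers from iterated preimages, apply the Cais--Davis--Lubin criterion, and transfer branch-APFness back through the conjugation. The only presentational difference is that you absorb the sign $(-1)^{p+1}$ into the Kummer equation defining $u$, whereas the paper normalizes $u$ so that $s_{p^r}=t_0$ and then builds the sign into the twisted minimal polynomial $h_n(x)=f_1((-1)^{p+1}x)+(-1)^p\pi_{n-1}g_1((-1)^{p+1}x)$; in particular your closing remark that $u=1$ corresponds to $a_{p^r}=g(\gamma)$ is off by a sign for $p=2$ under your normalization (you would need $a_{p^r}=-g(\gamma)$), though this does not affect the proof of the theorem itself.
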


\begin{proof}
%%% New added overview, to address comment from referee Z.
Our goal here is to apply Theorem 4.1.  To do so, we need to demonstrate a tower $\{E_n\}$ of subextensions of $K(\phi^{-\infty}(b))$ with a norm-compatible sequence of uniformizers $\{\pi_n\}$ satisfying the two conditions of Theorem 4.1.  We define this tower and sequence of uniformizers via the non-unit fixed points of a conjugate of $\phi^m$, and show that the tower of extensions aligns with a subsequence of the extensions give by the preimages of $b$ under $\phi$.

Write $\phi^m(x) = f(x)/g(x)$ where $f, g \in \O_K[x]$. Because $\phi$ has good reduction, it follows from \eqref{redhyp} that
$f(x) = \sum_{i = 0}^{p^r} a_ix^i$ and $g(x) = \sum_{i = 0}^{p^r} b_ix^i$,
with $v_K(a_{p^r}) = v_K(b_0) = 0$ and all other coefficients having positive valuation.  
We then examine the Newton polygon of $f(x) - xg(x)$ to get information on the fixed points of $\phi^m(x)$.  If $f(x) - xg(x) = \sum_{i = 0}^{p^r+1} c_ix^i$, note that 
\begin{itemize}
    \item $v_K(c_{p^r+1}) = v_K(b_{p^r}) > 0$,
    \item $v_K(c_{p^r}) = v_K(a_{p^r} - b_{p^r-1}) = v_K(a_{p^r}) = 0$,
    \item $v_K(c_1) = v_K(a_1-b_0) = v_K(b_0) = 0$, 
    \item and $v_K(c_0) = v_K(a_0) > 0$.
\end{itemize}  
We therefore have a unique fixed point $\delta$ of $\phi^m$ with $v_K(\delta) < 0$ and a unique fixed point $\gamma$ of $\phi^m$ with $v_K(\gamma) > 0$. 
Put 
\begin{equation} \label{newmudef}
\mu(x) = w \cdot \left(\frac{x - \gamma}{1 -\delta^{-1}x} \right),
\end{equation}
where $w \in \overline{K}$ is a unit that will be specified later, and set $\psi = \mu \circ \phi \circ \mu^{-1}$. Observe that $\overline{\mu}(x) = \overline{w}x$, where $\overline{w} \neq 0$, and thus from \eqref{redhyp} we have that $\psi$ has good reduction and 
\begin{equation} \label{redhyp2}
\text{$\overline{\psi^m}(x) = c_1x^{p^r}$ for $c_1 = c\overline{w}^{p^r - 1} \neq 0$.}
\end{equation}
Write $\psi^m(x) = f_1(x)/g_1(x)$ where $f_1, g_1 \in \O_K[x]$ with
\begin{equation} \label{st}
f_1(x) = \sum_{i = 0}^{p^r} s_ix^i \qquad \text{and} \qquad g_1(x) = \sum_{i = 0}^{p^r} t_ix^i.
\end{equation}
The conjugation by $\mu$ yields $\psi^m(0) = 0$ and $\psi^m(\infty) = \infty$, and thus $s_0 = t_{p^r} = 0$. 
Moreover, \eqref{redhyp2} shows that $s_{p^r}$ and $t_0$ are both units, and also that $\overline{s_{p^r}} / \overline{t_0} = c\overline{w}^{p^r - 1}$. We now select $w$  so that $s_{p^r} = t_0$ (note that $\overline{w} = c^{-1/(p^r - 1)}$ but that $w$ itself is a root of a polynomial with coefficients in $K(\gamma, \delta)$). 
Dividing through by this common value, we may assume that $s_{p^r} = t_0 = 1$. Note that in the case where $\delta = \infty$, we have
$
w^{p^r-1} = a_{p^r}/g(\gamma), 
$ and hence we may take $w = 1$ when $\phi$ is a monic polynomial. 

We now construct the norm-compatible sequence of uniformizers necessary to apply Theorem \ref{maintool}, which the special form of $\psi^m$ allows us to do using iterated preimages.
Let $\pi_1$ be a uniformizer for the field $E_1 := K(\gamma, \delta, w)$. For $n \geq 2$, inductively define the tuple $(h_n, \pi_n, E_n)$ such that $\pi_n$ is a root of the polynomial 
$$h_n(x) := f_1((-1)^{p+1}x) + (-1)^{p} \pi_{n-1}g_1((-1)^{p+1}x)$$
and $E_n = E_{n-1}(\pi_n)$. Observe that $h_n$ has degree $p^r$ and leading coefficient $s_{p^r} + (-1)^p \pi_{n-1} t_{p^r} = 1$, and satisfies $h_n(0) = s_0 + (-1)^{p} \pi_{n-1}t_0 = (-1)^{p}\pi_{n-1}$. Now each of $s_0, \ldots, s_{p^r-1}, t_1, \ldots, t_{p^r}$ has $v_K$ at least one, and thus $h_n$ has Newton polygon (with respect to the normalized valuation on $E_n$) consisting of a single segment of slope $-1/p^r$.
Hence $h_n$ is irreducible over $E_n$, $\pi_{n+1}$ is a uniformizer for $E_n(\pi_{n+1}) = E_{n+1}$, and 
$$N_{E_{n+1}/E_n}(\pi_{n+1}) = (-1)^{\deg h_n}(-1)^{p}\pi_{n} = \pi_n.$$
Therefore $\pi_n$ is a uniformizer for $E_n$, with minimal polynomial $h_n(x) \in E_{n-1}[x]$, and $N_{E_n/E_{n-1}}(\pi_n) = \pi_{n-1}$. 
Theorem \ref{maintool} then gives that $\cup E_n$ is APF over $E_1$.

Now $h_n(\pi_n) = 0$ implies that $\psi^m(\pi_n) = (-1)^{p+1}\pi_{n-1}$. When $p$ is odd, the sequence $\{\pi_n\}_{n \geq 1}$ shows that $L_\infty(\psi^m, \pi_1)$ is branch-APF over $E_1$. Define a sequence $\{\alpha_i\}_{i \geq 1}$ with $\alpha_{m(n-1) + 1} = \pi_n$ for all $n \geq 1$, and such that $\phi(\alpha_i) = \alpha_{i-1}$ for all $i \geq 1$. Then $E_1(\alpha_1, \alpha_2, \ldots) = E_1(\pi_1, \pi_2, \ldots)$, showing that $L_\infty(\psi, \pi_1)$ is branch-APF over $E_1$. From \eqref{conjeq} it follows that $L_\infty(\phi, \mu^{-1}(\pi_1))$ is branch-APF over $E_1$, and the theorem is proved with $\pi = \pi_1$ in the case of odd $p$.

If $p = 2$, then $h_n(\pi_n) = 0$ implies that $\psi^m(-\pi_n) = (-1)^{p+1}\pi_{n-1} = -\pi_{n-1}$. Therefore $\cup E_n$ is a branch sub-extension of $L_\infty(\psi^m, -\pi_1) $ over $E_1$, and as in the previous paragraph, $L_\infty(\phi, \mu^{-1}(-\pi_1))$ is branch-APF over $E_1$. As $\pi_1$ was an arbitrary uniformizer for $E_1$, $-\pi_1$ is also an arbitrary uniformizer, and the theorem is proved.
\end{proof}

\begin{example} \label{nonPCB}
Let $K = \Q_2$, and consider the polynomial $\phi(x) = (x-2)^8 - 2(x-2)^2 + 3$. Then $\phi$ has six critical points of valuation $-1/6$, and the orbit of each approaches $\infty$. But Theorem \ref{mainapf} applies because $\overline{\phi^2}(x) = x^{64}$. Indeed, we have $\delta = \infty$ and $w = 1$ because $\phi$ is a monic polynomial, and we also have $\gamma = 2$. Theorem \ref{mainapf} then gives that $L_\infty(\phi, \pi+2)$ is branch-APF over $\Q_2$ for any uniformizer $\pi$ of $\Q_2$. In particular, if $b \in 4\Z_2$ then taking $\pi = -2 + b$ shows that $K(\phi^{-\infty}(b))$ is branch-APF over $\Q_2$.
\end{example}

\section*{Acknowledgement}

The authors are grateful to the referees for their careful reading and many helpful suggestions.

\bibliographystyle{plain}

\begin{thebibliography}{10}

\bibitem{arithbas}
Faseeh Ahmad, Robert~L. Benedetto, Jennifer Cain, Gregory Carroll, and Lily
  Fang.
\newblock The arithmetic basilica: a quadratic {PCF} arboreal {G}alois group.
\newblock {\em J. Number Theory}, 238:842--868, 2022.

\bibitem{hajir}
Wayne Aitken, Farshid Hajir, and Christian Maire.
\newblock Finitely ramified iterated extensions.
\newblock {\em Int. Math. Res. Not.}, (14):855--880, 2005.

\bibitem{Poonen2}
Jacqueline Anderson, Spencer Hamblen, Bjorn Poonen, and Laura Walton.
\newblock Local arboreal representations.
\newblock {\em Int. Math. Res. Not. IMRN}, (19):5974--5994, 2018.

\bibitem{petsche}
Jesse Andrews and Clayton Petsche.
\newblock Abelian extensions in dynamical {G}alois theory.
\newblock {\em Algebra Number Theory}, 14(7):1981--1999, 2020.

\bibitem{bandini}
Andrea Bandini and Laura Paladino.
\newblock Fields generated by torsion points of elliptic curves.
\newblock {\em J. Number Theory}, 169:103--133, 2016.

\bibitem{finiteram}
Andrew Bridy, Patrick Ingram, Rafe Jones, Jamie Juul, Alon Levy, Michelle
  Manes, Simon Rubinstein-Salzedo, and Joseph~H. Silverman.
\newblock Finite ramification for preimage fields of post-critically finite
  morphisms.
\newblock {\em Math. Res. Lett.}, 24(6):1633--1647, 2017.

\bibitem{CDL}
Bryden Cais, Christopher Davis, and Jonathan Lubin.
\newblock A characterization of strictly {APF} extensions.
\newblock {\em J. Th\'{e}or. Nombres Bordeaux}, 28(2):417--430, 2016.

\bibitem{wildthings}
Marcus du~Sautoy and Ivan Fesenko.
\newblock Where the wild things are: ramification groups and the {N}ottingham
  group.
\newblock In {\em New horizons in pro-{$p$} groups}, volume 184 of {\em Progr.
  Math.}, pages 287--328. Birkh\"{a}user Boston, Boston, MA, 2000.

\bibitem{foz}
Andrea Ferraguti, Alina Ostafe, and Umberto Zannier.
\newblock Cyclotomic and abelian points in backward orbits of rational
  functions.
\newblock {\em Adv. Math.}, 438: Paper No. 109463, 2024.


\bibitem{ferraguti-pagano}
Andrea Ferraguti and Carlo Pagano.
\newblock Abelian dynamical Galois groups for unicritical polynomials.
\newblock Available at https://arxiv.org/abs/2303.04783.

\bibitem{richtang}
Richard Gottesman and Kwokfung Tang.
\newblock Quadratic recurrences with a positive density of prime divisors.
\newblock {\em Int. J. Number Theory}, 6(5):1027--1045, 2010.

\bibitem{survey}
Rafe Jones.
\newblock Galois representations from pre-image trees: an arboreal survey.
\newblock In {\em Actes de la {C}onf\'erence ``{T}h\'eorie des {N}ombres et
  {A}pplications''}, Publ. Math. Besan\c con Alg\`ebre Th\'eorie Nr., pages
  107--136. Presses Univ. Franche-Comt\'e, Besan\c con, 2013.

\bibitem{lubin}
Jonathan Lubin.
\newblock Elementary analytic methods in higher ramification theory.
\newblock {\em J. Number Theory}, 133(3):983--999, 2013.

\bibitem{pagano}
Carlo Pagano.
\newblock The size of arboreal images, I: exponential lower bounds for PCF and unicritical polynomials.
\newblock Available at https://arxiv.org/abs/2104.11175.


\bibitem{pink2}
Richard Pink.
\newblock {Profinite iterated monodromy groups arising from quadratic
  polynomials}.
\newblock Available at http://arxiv.org/abs/1307.5678.

\bibitem{sen}
Shankar Sen.
\newblock Ramification in {$p$}-adic {L}ie extensions.
\newblock {\em Invent. Math.}, 17:44--50, 1972.

\bibitem{serrebook}
Jean-Pierre Serre.
\newblock {\em Local fields}, volume~67 of {\em Graduate Texts in Mathematics}.
\newblock Springer-Verlag, New York-Berlin, 1979.
\newblock Translated from the French by Marvin Jay Greenberg.

\bibitem{AEC}
Joseph~H. Silverman.
\newblock {\em The Arithmetic of Elliptic Curves}, volume 106 of {\em Graduate
  Texts in Mathematics}.
\newblock Springer-Verlag, New York, 1992.
\newblock Corrected reprint of the 1986 original.

\bibitem{jhsdynam}
Joseph~H. Silverman.
\newblock {\em The Arithmetic of Dynamical Systems}, volume 241 of {\em
  Graduate Texts in Mathematics}.
\newblock Springer, New York, 2007.

\bibitem{sing1}
Mark O.~S. Sing.
\newblock A dynamical analogue of sen's theorem.
\newblock {\em Int. Math. Res. Not. IMRN}, (9):7502–7540, 2023.

\bibitem{sing2}
Mark O.~S. Sing.
\newblock A dynamical analogue of the criterion of
  {N}\'eron-{O}gg-{S}hafarevich.
\newblock Available at https://arxiv.org/abs/2208.00359.

\bibitem{Wintenberger}
Jean-Pierre Wintenberger.
\newblock Le corps des normes de certaines extensions infinies de corps locaux;
  applications.
\newblock {\em Ann. Sci. \'{E}cole Norm. Sup. (4)}, 16(1):59--89, 1983.

\end{thebibliography}

\end{document}